\newtheorem{theorem}{Theorem}[section]
\newtheorem{lemma}[theorem]{Lemma}
\theoremstyle{definition}
\newtheorem{definition}[theorem]{Definition}
\newtheorem{question}[theorem]{Question}
\newtheorem{proposition}[theorem]{Proposition}
\newtheorem{corollary}[theorem]{Corollary}
\newtheorem{remark}[theorem]{Remark}
\newtheorem{conjecture}[theorem]{Conjecture}
\theoremstyle{remark}
\newcommand{\be}{\begin{equation}}
\newcommand{\ee}{\end{equation}}
\numberwithin{equation}{section}
\begin{document}
\title[Fujita's conjectures and compactification of homology cells]{Compactification of homology cells, Fujita's conjectures and the complex projective space}
\author{Ping Li}
\address{School of Mathematical Sciences, Fudan University, Shanghai 200433, China}
\email{pinglimath@fudan.edu.cn\\
pinglimath@gmail.com}

\author{Thomas Peternell}
\address{Mathematisches Institut, Universit\"{a}t Bayreuth, 95440 Bayreuth, Germany}
\email{thomas.peternell@uni-bayreuth.de}
\thanks{The first-named author was partially supported by the National
Natural Science Foundation of China (Grant No. 12371066).
 }

 \subjclass[2010]{32J05, 14F35, 14F45, 14J70.}

\keywords{compactification, cohomology complex projective space, homotopy complex projective space.}

\begin{abstract}  We show that a compact K\"ahler manifold $M$ containing a smooth connected divisor $D$ such that $M \setminus D$ is a homology cell, e.g., contractible, must be projective space
with $D$ a hyperplane, provided $\dim M \not \equiv 3 \pmod 4$. This answers conjectures of Fujita in these dimensions.
\end{abstract}

\maketitle

\section{Introduction}
Recall the following two basic problems in Hirzebruch's famous 1954 problem list; Problems $27$ and $28$ in \cite[p.231]{Hi54}.

A pair $(M,D)$ is called an (analytic) \emph{compactification} of an open $n$-dimensional complex manifold $U$ if $M$ is an $n$-dimensional compact complex manifold and $D\subset M$ an analytic subvariety such that $M\backslash D$ is biholomorphic to $U$. Such a compactification $(M,D)$ is called \emph{smooth} resp. \emph{K\"{a}hler} if $D$ is smooth or $M$ is K\"{a}hler, respectively.

In \cite[Problem 27]{Hi54}, Hirzebruch raised the problem of classifying the
compactifications $(M,D)$ of $U=\mathbb{C}^n$ with second Betti number $b_2(M)=1$. The Betti number condition is equivalent to the irreducibility of the subvariety $D$.

The standard example is $(M,D)=(\mathbb{P}^n,\mathbb{P}^{n-1})$, where $\mathbb{P}^{n-1}$ is some \emph{linearly} embedded subspace in $\mathbb{P}^n$. When $n=1$ or $2$, $(\mathbb{P}^n,\mathbb{P}^{n-1})$ is the unique example (\cite{RvdV60}). The only smooth compactification for $n=3$ is $(\mathbb{P}^3,\mathbb{P}^{2})$ (\cite[Thm 2.4]{BM}). When $n=3$ and $D$ is allowed to be singular, the classification is complicated (see \cite[p.781-782]{Hi87}, \cite{PS} and the references therein). When $n\leq 6$, the only K\"{a}hler smooth compactification is $(\mathbb{P}^n,\mathbb{P}^{n-1})$ (\cite{vdV}, \cite{Fu80-2}). We refer to \cite{BM} and \cite{PS} for a survey on these historical materials. It has been a long-standing open question if the only K\"{a}hler smooth compactification of $\mathbb{C}^n$ is $(\mathbb{P}^n,\mathbb{P}^{n-1})$ (\cite[Conjecture 3.2]{BM}), which was
recently answered in the affirmative by Chi Li and Zhou (\cite{LZ}), and by \cite{Pe} in the even dimensional case (in the more general setting where $U$ is a homology cell, discussed below).

In \cite[Problem 28]{Hi54}, Hirzebruch further raised the problem of determining all complex or K\"{a}hler structures on (the underlining differentiable manifold of) $\mathbb{P}^n$. For the complex structure the uniqueness is well-known for $n=1$ and now known for $n=2$ (\cite{yau}, \cite[\S3]{De}).
In dimension $n \geq 3$, the problem is wide open although there are some partial results in dimension $3$.
 Hirzebruch observed that (\cite[p.223]{Hi54}) the uniqueness of complex structure on $\mathbb{P}^3$ would imply the nonexistence of complex structures on $S^6$ (see \cite[Prop.3.1]{To} for a detailed proof). In fact, blowing up a point in a potential complex structure on $S^6$ one obtains a complex structure $M$ on $\mathbb P_3$ such that $M \setminus D$ is contractible, where $D$ is the exceptional divisor. Here $M \setminus D$ is of course far from being Stein.

For the K\"{a}hler structure the problem has been solved due to the following uniqueness result: a K\"{a}hler manifold homeomorphic to $\mathbb{P}^n$ must be biholomorphic to $\mathbb{P}^n$. Its proof combines a result of Hirzebruch and Kodaira (\cite{HK}), the homeomorphic invariance of rational Pontrjagin classes due to Novikov (\cite{No}), and the Miyaoka-Yau Chern number inequality (\cite{CO},\cite{yau}). A detailed proof can be found in the nice expository paper \cite{To} by Tosatti. When the dimension $n$ are small enough ($ n\leq 6$), the condition ``homeomorphic to $\mathbb{P}^n$" can be further relaxed to various weaker conditions (\cite{Fu80-2},\cite{LS},\cite{Wil},\cite{LW},\cite{ye}, \cite{Li16}, \cite{De}).
We remark that all these results rely on a well-known criterion due to Kobayashi and Ochiai (\cite{KO}): the Fano index of an $n$-dimensional Fano manifold is no more than $n+1$, with equality if and only if it is biholomorphic to $\mathbb{P}^n$.

Motivated by these two problems and some related results, Fujita raised in \cite[\S1]{Fu80-2} the following three closely related conjectures of increasing strength.
\begin{conjecture}[$A_n$]\label{An}
Let $U$ be an $n$-dimensional contractible complex manifold and $(M,D)$ a K\"{a}hler smooth compactification. Then $U\cong\mathbb{C}^n$ and $(M,D)$ is the standard example $(\mathbb{P}^n,\mathbb{P}^{n-1})$.
\end{conjecture}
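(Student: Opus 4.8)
The plan is to prove the statement — in fact its strengthening in which $U$ is only assumed to be a homology cell — for $n\not\equiv 3\pmod 4$, by identifying $(M,\mathcal{O}_M(D))$ with $(\mathbb{P}^n,\mathcal{O}(1))$ through its polarized invariants. \emph{First} I would extract the topology from the homology cell hypothesis. The Thom--Gysin sequence of the (complex, hence oriented) normal bundle of the smooth connected divisor $D$ relates $H^\bullet(M)$, $H^\bullet(M\setminus D)$ and $H^{\bullet-2}(D)$; feeding in $H^{>0}(M\setminus D)=0$ yields $H^1(M)=0$, an isomorphism $H^0(D)\xrightarrow{\sim}H^2(M)$ (so $H^2(M)\cong\mathbb{Z}$, generated by $[D]$ since $D$ is connected), and isomorphisms $H^k(M)\cong H^{k-2}(D)$ for $k\ge 3$. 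Combining these Gysin isomorphisms with the Lefschetz hyperplane theorem and hard Lefschetz for the forthcoming ample class $[D]$, an induction on the degree should identify the cohomology of $M$ with that of $\mathbb{P}^n$ and that of $D$ with $\mathbb{P}^{n-1}$; in particular $D^n=\int_M[D]^n=1$.

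\emph{Next} comes the complex-geometric normalization. Since $b_2(M)=1$ and $M$ is K\"ahler, the K\"ahler cone is a half-line of $H^2(M;\mathbb{R})\cong\mathbb{R}$, so $M$ is projective with $\operatorname{Pic}(M)=\mathbb{Z}\langle\mathcal{O}_M(D)\rangle$ and $D$ ample. The vanishing $H^1(M)=0$ together with the Hodge decomposition forces $h^{p,0}(M)=0$ for $p>0$, so $K_M$ is not effective; writing $-K_M=r\,\mathcal{O}_M(D)$ with $r\in\mathbb{Z}$ gives $r\ge 1$, i.e. $M$ is a Fano manifold of Picard number one and index $r$, with $1\le r\le n+1$ by the Kobayashi--Ochiai bound.

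\emph{The heart of the matter} is to prove $r=n+1$. Since $\mathcal{O}_M(D)=K_M+(r+1)\mathcal{O}_M(D)$ and $(r+1)D$ is ample, Kawamata--Viehweg vanishing gives $h^0(M,\mathcal{O}_M(D))=\chi(M,\mathcal{O}_M(D))$, and the restriction sequence $0\to\mathcal{O}_M\to\mathcal{O}_M(D)\to N_{D/M}\to 0$ reduces this to $\chi(\mathcal{O}_M)+\chi(D,N_{D/M})=1+\chi(D,N_{D/M})$. The aim is to show this equals $n+1$, i.e. $\chi(D,N_{D/M})=n$, which is precisely the vanishing of the $\Delta$-genus $\Delta(M,\mathcal{O}_M(D))=\dim M+D^n-h^0(\mathcal{O}_M(D))$. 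Evaluating $\chi(D,N_{D/M})$ by Hirzebruch--Riemann--Roch on $D$ requires the Todd class of $D$, equivalently the \emph{integral} Chern, and hence Pontrjagin, classes of $M$. This is the crux and the \textbf{main obstacle}: the cohomology ring of a cohomology $\mathbb{P}^n$ does \emph{not} determine these classes. The homology cell hypothesis must be used to rigidify the smooth/topological type of $M$ — ideally to show $M$ is homeomorphic to $\mathbb{P}^n$, so that its rational Pontrjagin classes become the standard ones by Novikov's topological invariance theorem, whence $r=n+1$ follows. I expect this rigidification, which bears on the middle-dimensional intersection form and on surgery-type data sensitive to $n\bmod 4$, to succeed precisely when $n\not\equiv 3\pmod 4$; the $S^6$ phenomenon at $n=3$ recalled above, where blowing up a hypothetical complex structure on $S^6$ produces a non-standard compactification of a contractible $U$, shows that the excluded congruence is genuine.

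\emph{Finally}, once $\Delta(M,\mathcal{O}_M(D))=0$ with $D^n=1$ and Picard number one, Fujita's classification of polarized manifolds of $\Delta$-genus zero — or, equivalently, the Kobayashi--Ochiai criterion applied to the index $r=n+1$ — forces $(M,\mathcal{O}_M(D))\cong(\mathbb{P}^n,\mathcal{O}(1))$. Thus $M\cong\mathbb{P}^n$, the divisor $D$ is a hyperplane, and $U=M\setminus D\cong\mathbb{C}^n$, establishing $(A_n)$ in the asserted range.
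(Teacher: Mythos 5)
Your overall skeleton (Gysin/Poincar\'e duality to get the cohomology rings, then $b_2=1$ plus K\"ahlerness to get $\operatorname{Pic}(M)=\mathbb{Z}\,\mathcal{O}_M(D)$ with $D$ ample and $M$ Fano, then Kobayashi--Ochiai or $\Delta$-genus zero to conclude once the index is known to be $n+1$) matches the paper's. But the step you yourself flag as ``the heart of the matter'' is a genuine gap, not a proof. You propose to rigidify $M$ up to \emph{homeomorphism} with $\mathbb{P}^n$ and then invoke Novikov's topological invariance of rational Pontrjagin classes. Nothing in your outline produces such a homeomorphism, and no known technique does: what one can prove from the hypotheses is only a homotopy equivalence $M\simeq\mathbb{P}^n$ (the paper's Proposition \ref{homotopy equivalent}, via $K(\mathbb{Z},2)$, cellular approximation and Whitehead), and homotopy equivalence to $\mathbb{CP}^n$ emphatically does not imply homeomorphism --- the structure set of $\mathbb{CP}^n$ ($n\geq 3$) contains infinitely many manifolds distinguished precisely by their Pontrjagin classes, which is the invariant you need. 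Moreover, if one \emph{could} show $M$ is homeomorphic to $\mathbb{P}^n$, then by Hirzebruch--Kodaira--Novikov--Yau (recalled in the paper's introduction) $M$ would be biholomorphic to $\mathbb{P}^n$ for \emph{every} $n$, so your expectation that this rigidification ``succeeds precisely when $n\not\equiv 3\pmod 4$'' cannot be the right mechanism; the congruence does not come from surgery-theoretic data, and the $S^6$ example does not show the excluded case is genuine (that example is non-K\"ahler, and the paper in fact suspects the non-standard case never occurs).

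The paper circumvents exactly the obstacle you identified (``the cohomology ring does not determine the Todd class'') in a different, and much more economical, way: while the ring does not determine all Chern classes, the \emph{Hodge numbers} of $M$ and $D$ are forced by their Betti numbers, and by the $\chi_y$-genus expansion at $y=-1$ (Libgober--Wood, formula (\ref{HRR})) this pins down the single Chern number combinations
\begin{equation}
c_1c_{n-1}[M]=\tfrac{1}{2}n(n+1)^2,\qquad c_1c_{n-2}[D]=\tfrac{1}{2}(n-1)n^2,\nonumber
\end{equation}
which, combined with adjunction $c_1(M)=c_1(D)+1$ and $c_{n-1}(M)=n+c_{n-2}(D)$, yields a quadratic equation whose only solutions are $c_1(M)=(n+1)x$ or $\tfrac{1}{2}(n+1)x$ (Proposition \ref{Peternell proposition}). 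The non-standard index is then excluded for $n\not\equiv 3\pmod 4$ using only a \emph{homotopy} invariant: by Wu's formula $w_2$ is a homotopy-type invariant, and $w_2\equiv c_1\pmod 2$, so $\tfrac{1}{2}(n+1)\equiv n+1\pmod 2$, which forces $n\equiv 3\pmod 4$. If you want to salvage your writeup, replace your homeomorphism/Novikov step by this $\chi_y$-genus identity plus the $w_2$ parity argument; the rest of your outline (including the reduction of the contractible case to the homology-cell case, which the paper does via Poincar\'e--Alexander--Lefschetz duality) then goes through.
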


\begin{conjecture}[$B_n$]\label{Bn}
Let $M$ be an $n$-dimensional projective manifold and $D$ a smooth ample divisor on $M$. Suppose that the natural homomorphism $H_k(D;\mathbb{Z})\longrightarrow H_k(M;\mathbb{Z})$ is bijective for $0\leq k\leq 2(n-1)$. Then $M\cong\mathbb{P}^n$ and $D$ is a hyperplane on it.
\end{conjecture}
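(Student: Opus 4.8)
The plan is to show that the homological hypothesis forces the polarized pair $(M,\mathcal O_M(D))$ to be, first cohomologically and topologically, and then holomorphically, the pair $(\mathbb{P}^n,\mathcal O(1))$, bringing in the deep rigidity inputs (Hirzebruch--Kodaira, Novikov, Miyaoka--Yau, Kobayashi--Ochiai) cited above only at the very end.

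First I would extract the cohomology ring. By the weak Lefschetz theorem the restriction $i^{\ast}\colon H^{k}(M)\to H^{k}(D)$ is an isomorphism for $k\le n-2$ and injective for $k=n-1$; dually, via Poincar\'e duality on $M$ and on $D$, the hypothesis says precisely that the Gysin map $i_{!}\colon H^{j}(D)\to H^{j+2}(M)$ is an isomorphism for $0\le j\le 2n-2$. Since cup product with $h:=c_{1}(\mathcal O_{M}(D))$ factors as $i_{!}\circ i^{\ast}$, I obtain that $\cup\,h\colon H^{j}(M)\to H^{j+2}(M)$ is an isomorphism for $0\le j\le n-2$, while the hard Lefschetz theorem supplies the isomorphism in the middle degree. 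Chaining these shows all even Betti numbers equal $b_{0}=1$; in particular $b_{2}(M)=1$, and the fundamental class of $D$, being the image of a generator under $H_{2n-2}(D;\mathbb Z)\xrightarrow{\ \sim\ }H_{2n-2}(M;\mathbb Z)$, is a primitive generator, so $h$ generates $H^{2}(M;\mathbb Z)\cong\mathbb Z$. Standard Poincar\'e-duality bookkeeping then yields the integral ring $H^{\ast}(M;\mathbb Z)\cong\mathbb Z[h]/(h^{n+1})$ with $\int_{M}h^{n}=1$ and $\mathrm{Pic}(M)=\mathbb Z\cdot\mathcal O_{M}(D)$.

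The next step is to kill the odd cohomology and to recognize $U:=M\setminus D$ as a homology cell. Because $b_{2}(M)=1$ and $M$ is K\"ahler, the Hodge decomposition forces $h^{2,0}(M)=0$; a Castelnuovo--de Franchis/Albanese argument should then show that the irregularity $q(M)=h^{1,0}(M)$ vanishes, since a nonzero space of holomorphic $1$-forms would, through the constraint $b_{2}=1$, produce an incompatible fibration. Hence $b_{1}(M)=0$, all odd Betti numbers vanish, and $M$ is an integral cohomology $\mathbb{P}^{n}$. Running the Gysin sequence of a tubular neighborhood of $D$ in the reverse direction then gives $\widetilde H^{\ast}(U;\mathbb Z)=0$, so $U$ is a homology cell, the torsion-freeness just established making this an integral statement. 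This reduces Conjecture $B_{n}$ to the K\"ahler homology-cell situation of our main theorem.

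It then remains to identify the complex structure, and this is where the hypothesis $n\not\equiv 3\pmod 4$ and the rigidity machinery enter. I would exhibit $M$ as a homotopy $\mathbb{P}^{n}$ (the ring $\mathbb Z[h]/(h^{n+1})$ together with $\pi_{1}(M)=0$, the latter from Fano-type rational connectedness of the compactification), and conclude via the chain Hirzebruch--Kodaira $+$ Novikov's topological invariance of the rational Pontryagin classes $+$ Miyaoka--Yau that a K\"ahler homotopy $\mathbb{P}^{n}$ is biholomorphic to $\mathbb{P}^{n}$; equivalently, one verifies $M$ is Fano of index $n+1$ and applies the Kobayashi--Ochiai criterion. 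Once $M\cong\mathbb{P}^{n}$, the primitivity of $h=[D]$ forces $\mathcal O_{M}(D)=\mathcal O(1)$, so $D$ is a hyperplane. I expect the genuine obstacle to lie here, in the complex-geometric identification: the restriction $n\not\equiv 3\pmod 4$ is exactly what excludes the exotic homeomorphism types of homotopy $\mathbb{P}^{n}$ that the Pontryagin-class data alone cannot rule out (the obstruction already visible for $n=3$ through $S^{6}$), whereas the Lefschetz bookkeeping of the first step and the Hodge vanishing of the second are comparatively formal.
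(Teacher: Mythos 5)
Your first two steps (the ring $H^{\ast}(M;\mathbb{Z})\cong\mathbb{Z}[h]/(h^{n+1})$, $\mathrm{Pic}(M)=\mathbb{Z}\cdot\mathcal{O}_M(D)$, and $b_1(M)=0$ via the Hodge/Albanese argument) are sound and match the paper's Lemma \ref{lemma-isomorphism} and Theorem \ref{Fujita theorem}. The gap is in your final step, and it is fatal in two ways. First, the rigidity chain Hirzebruch--Kodaira $+$ Novikov $+$ Miyaoka--Yau applies to K\"ahler manifolds \emph{homeomorphic} to $\mathbb{P}^n$, not merely homotopy equivalent to it: Novikov's theorem asserts the invariance of rational Pontryagin classes under \emph{homeomorphisms}, and these classes are emphatically not homotopy invariants --- surgery theory produces infinitely many smooth manifolds homotopy equivalent to $\mathbb{P}^n$ with pairwise distinct $p_1$. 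So ``a K\"ahler homotopy $\mathbb{P}^n$ is biholomorphic to $\mathbb{P}^n$'' is not a theorem; it is essentially Conjecture \ref{Cn}, open for $n\geq 7$, and if your chain worked it would prove Conjectures $B_n$ and $C_n$ in \emph{all} dimensions with no congruence restriction --- a red flag. Second, your fallback ``equivalently, one verifies $M$ is Fano of index $n+1$'' contains no mechanism for computing the index: from the ring structure alone you only know $c_1(M)=\lambda h$ with $\lambda\in\mathbb{Z}$, and at that stage nothing even shows $\lambda>0$ (so the appeal to ``Fano-type rational connectedness'' for $\pi_1(M)=0$ is also circular).

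The paper's actual content is precisely this missing mechanism. Writing $c_1(M)=\lambda x$ and $c_1(D)=\mu x_0$, adjunction via $0\to TD\to TM\big|_D\to N_D\to 0$ gives $\lambda=\mu+1$ and $c_{n-1}(M)=n+c_{n-2}(D)$, while the Libgober--Wood identity (the coefficient of $(y+1)^2$ in the $\chi_y$-genus, Lemma \ref{LW}) gives $c_1c_{n-1}[M]=\frac{1}{2}n(n+1)^2$ and $c_1c_{n-2}[D]=\frac{1}{2}(n-1)n^2$, since $M$ and $D$ have the Betti numbers of $\mathbb{P}^n$ and $\mathbb{P}^{n-1}$. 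Solving the resulting quadratic pins down $\lambda\in\{n+1,\frac{1}{2}(n+1)\}$ (Proposition \ref{Peternell proposition}); in particular $M$ is Fano, hence simply connected, hence a homotopy $\mathbb{P}^n$ by Proposition \ref{homotopy equivalent}. The only homotopy invariance the paper then needs is the much weaker Wu formula --- $w_2$ is a homotopy invariant --- which forces $\lambda\equiv n+1\pmod{2}$ and so excludes $\lambda=\frac{1}{2}(n+1)$ exactly when $n\not\equiv 3\pmod{4}$; Kobayashi--Ochiai finishes the case $\lambda=n+1$. So the congruence condition does not enter where you place it (ruling out exotic homeomorphism types of homotopy $\mathbb{P}^n$); it enters as a parity comparison between the two candidate Fano indices, and the statement as given, for $n\equiv 3\pmod{4}$, remains open both in the paper and in your proposal.
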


\begin{definition} We will say that $M \setminus D$ is a \emph{homology cell} if  $H_k(D;\mathbb{Z})\longrightarrow H_k(M;\mathbb{Z})$ is bijective for $0\leq k\leq 2(n-1)$.
\end{definition}

\begin{conjecture}[$C_n$]\label{Cn}
Let $M$ be an $n$-dimensional Fano manifold such that its integral cohomology ring $H^{\ast}(M;\mathbb{Z})\cong H^{\ast}(\mathbb{P}^n;\mathbb{Z})$. Then $M\cong\mathbb{P}^n$.
\end{conjecture}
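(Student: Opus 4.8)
The plan is to reduce the entire statement to the determination of the Fano index and then to exploit the very rigid constraint $\int_M H^n = 1$ that the ring structure imposes. First, since $H^{\ast}(M;\mathbb{Z}) \cong H^{\ast}(\mathbb{P}^n;\mathbb{Z})$ we have $b_2(M) = 1$ and each $H^{2i}(M;\mathbb{Z}) = \mathbb{Z}\cdot H^i$ is torsion-free, where $H$ generates $H^2(M;\mathbb{Z})$; because $M$ is Fano, $H^1(M,\mathcal{O}_M) = H^2(M,\mathcal{O}_M) = 0$, so $\mathrm{Pic}(M) = H^2(M;\mathbb{Z}) = \mathbb{Z}\cdot H$ and $H$ may be taken ample. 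The ring isomorphism carries a generator of $H^{2n}(\mathbb{P}^n)$ to $H^n$, which forces $\int_M H^n = 1$. Writing $-K_M = rH$ for the Fano index $r \ge 1$, the Kobayashi–Ochiai criterion gives $r \le n+1$, with equality exactly when $M \cong \mathbb{P}^n$. Thus the whole statement reduces to proving $r = n+1$, after which $H$ is automatically a hyperplane class.

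Second, I would run Riemann–Roch against the ample generator. Set $P(t) = \chi(M, tH)$, a numerical polynomial of degree $n$ with leading coefficient $H^n/n! = 1/n!$. Fano (Kodaira) vanishing gives $P(0) = \chi(\mathcal{O}_M) = 1$; Serre duality together with $K_M = -rH$ yields the functional equation $P(t) = (-1)^n P(-r - t)$; and Kodaira vanishing forces the roots $P(-1) = \cdots = P(-(r-1)) = 0$. These constraints pin the polynomial down tightly. For example, when $r = n$ the $n-1$ forced roots $-1,\dots,-(n-1)$ leave a single remaining root; the functional equation requires it to sit at the symmetry center $-n/2$, while $P(0)=1$ forces it to equal $-n$, a contradiction, so $r = n$ cannot occur. (This is the numerical shadow of the fact that the quadric, the only index-$n$ Fano, has $\int H^n = 2$, not $1$.) When $r = n+1$ the $n$ forced roots give $P(t) = \binom{t+n}{n}$ compatibly with every constraint, and one reads off $h^0(M, H) = P(1) = n+1$.

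Third, I would feed this into Fujita's $\Delta$-genus theory. Since $H^n = 1$, the identity $\Delta(M, H) = n + H^n - h^0(M,H) = n + 1 - h^0(M,H)$ together with $\Delta(M,H) \ge 0$ shows $h^0(M,H) \le n+1$, and in the classification of polarized manifolds with $\Delta = 0$ the unique member of degree one is $(\mathbb{P}^n, \mathcal{O}(1))$. Hence it suffices to produce $n+1$ sections of $H$, i.e.\ to show $\Delta(M,H) = 0$; by the Riemann–Roch analysis this is equivalent to $r = n+1$. For small coindex $n + 1 - r \in \{2,3\}$ one can settle this by invoking the classification of del Pezzo and Mukai manifolds of Picard number one and checking that each has either nonvanishing odd Betti numbers or a cup-product structure incompatible with $H^{\ast}(\mathbb{P}^n;\mathbb{Z})$.

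The main obstacle is the range of large coindex. The Riemann–Roch equations leave the Hilbert polynomial undetermined once $r \le n-1$ — the missing roots may even be complex conjugate — so they no longer force $r = n+1$ on their own, and there is no structure theory for Fano manifolds of arbitrary coindex to fall back on. Controlling the index in full generality, equivalently forcing the degree-one polarization $(M,H)$ to have $\Delta = 0$ under the sole topological hypothesis that $M$ is a cohomology $\mathbb{P}^n$, is the hard core of the conjecture. This is where I expect the genuine work to lie, and presumably where the dimension restriction $n \not\equiv 3 \pmod 4$, together with signature and Pontryagin-class input in the spirit of Hirzebruch–Kodaira and Yau, must be brought to bear.
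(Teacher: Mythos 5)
You were asked to prove Conjecture \ref{Cn}, but this is an \emph{open conjecture}: the paper contains no proof of it, and none is known in general. What the paper does provide is much weaker: (i) Fujita's result that $(C_n)$ holds for $n\le 5$; (ii) Proposition \ref{homotopy equivalent} and Corollary \ref{coro}, which show that a Fano manifold with the integral cohomology ring of $\mathbb{P}^n$ is homotopy \emph{equivalent} to $\mathbb{P}^n$, whence, by Libgober--Wood, $(C_n)$ holds for $n\le 6$; and (iii) partial results toward the weaker Conjecture \ref{Bn} (settled there only for $n\not\equiv 3\pmod 4$, and using the extra datum of the divisor $D$). So there is no proof in the paper to compare yours against, and any purported complete proof should be treated with suspicion.

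Judged on its own terms, your proposal is not a proof either, and to your credit you say so explicitly. The individual reductions are correct: $\mathrm{Pic}(M)=\mathbb{Z}H$ with $\int_M H^n=1$; the Kobayashi--Ochiai bound $r\le n+1$ with equality characterizing $\mathbb{P}^n$; the exclusion of $r=n$ via the symmetry $P(t)=(-1)^nP(-r-t)$ of the Hilbert polynomial versus $P(0)=1$; and the equivalence of the conjecture with $\Delta(M,H)=0$, i.e.\ $h^0(M,H)=n+1$. But all of this merely restates the conjecture in equivalent forms: forcing $r=n+1$ (equivalently producing $n+1$ sections of the degree-one polarization) under the sole hypothesis $H^{\ast}(M;\mathbb{Z})\cong H^{\ast}(\mathbb{P}^n;\mathbb{Z})$ \emph{is} the conjecture, and your outline leaves exactly this step open. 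It is worth noting that the tools you gesture toward at the end are precisely the ones the paper deploys for its partial results: the $\chi_y$-genus identity of Lemma \ref{LW} gives $r\cdot c_{n-1}[M]=\frac{1}{2}n(n+1)^2$, hence a divisibility constraint on $r$, and homotopy invariance of $w_2$ (available here by Corollary \ref{coro}) gives $r\equiv n+1\pmod 2$; but even in the richer setting of Theorem \ref{firstmainresult}, where the divisor $D$ supplies a second Chern-number identity, these constraints only narrow the index to $n+1$ or $\frac{1}{2}(n+1)$, and the latter case survives precisely when $n\equiv 3\pmod 4$. In the bare $(C_n)$ setting they are even weaker. Your section on why large coindex is out of reach is an accurate description of the state of the art, not a gap you could have been expected to fill.
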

Conjecture $(A_n)$ is much stronger than the aforementioned folklore conjecture solved in \cite{LZ}. Fujita showed that $(C_n)$ implies $(B_n)$ and $(B_{n+1})$, $(B_n)$ implies $(A_n)$ (\cite[p.233]{Fu80-2}), and $(C_n)$ is true for $n\leq 5$ (\cite[Thm 1]{Fu80-2}).

\emph{The major purpose} in this note is to show the following result, which solves Conjecture $(B_n)$ in the affirmative for $n\not\equiv3 \pmod{4}$. Namely, we have
\begin{theorem}\label{firstmainresult}
Let $M$ be an $n$-dimensional projective manifold, $D$ a smooth divisor on $M$, such that $M \setminus D$ is a homology cell.
\begin{enumerate}
\item
If $n\not\equiv3 \pmod{4}$, $M\cong\mathbb{P}^n$ and $D$ is a hyperplane.

\item
If $n\equiv3 \pmod{4}$, either $M\cong\mathbb{P}^n$ and $D$ is a hyperplane, or $M$ is a Fano manifold with Picard number one, of index $\frac{1}{2}(n+1)$, and $D\in|\mathcal{O}_M(1)|$. Moreover, in the latter case $c_{n-1}(M)=n(n+1)x^{n-1}$ and $c_{n-2}(D)=n^2x_0^{n-2}$, where $x$ and $x_0$ are positive generators of $H^2(M;\mathbb{Z})$ and $H^2(D;\mathbb{Z})$ respectively.
\end{enumerate}
\end{theorem}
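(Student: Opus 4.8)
The plan is to deduce from the homology-cell hypothesis that $M$ and $D$ are \emph{integral cohomology projective spaces}, to pin down the Fano index of $M$ by two applications of the Libgober--Wood identity, and finally to invoke the Kobayashi--Ochiai criterion \cite{KO}.

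First I would reformulate the hypothesis: ``$M\setminus D$ is a homology cell'' is equivalent, via Poincaré duality, to the Gysin maps $i_!\colon H^{k-2}(D;\mathbb{Z})\to H^{k}(M;\mathbb{Z})$ being isomorphisms in the stated range, i.e. to $U:=M\setminus D$ being acyclic. Plugging $H^{\ast}(U)=H^{\ast}(\mathrm{pt})$ into the Gysin sequence $\cdots\to H^{k-2}(D)\xrightarrow{i_!}H^{k}(M)\to H^{k}(U)\to\cdots$ yields $H^1(M)=0$ and $i_!\colon H^{k-2}(D)\xrightarrow{\sim}H^{k}(M)$ for every $k\ge 2$. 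Hence $b_2(M)=1$, the generator is $[D]$, and (using $H^1(M)=0$) $\mathrm{Pic}(M)=\mathbb{Z}\,L$ with $L=\mathcal{O}_M(D)$ ample and $D\in|L|$. Feeding $i_!$ together with the Lefschetz hyperplane theorem for the ample $D$ (an isomorphism $H^{k}(M)\to H^{k}(D)$ for $k<n-1$) into an induction, and using Poincaré duality, gives $H^{\ast}(M;\mathbb{Z})\cong\mathbb{Z}[x]/(x^{n+1})$ with $\int_M x^{n}=1$, and similarly $H^{\ast}(D;\mathbb{Z})\cong\mathbb{Z}[x_0]/(x_0^{n})$ with $x_0=i^{\ast}x$ and $\int_D x_0^{n-1}=1$. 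Writing $-K_M=rL$, I would then rule out $r\le 0$: a cohomology projective space has $h^{n,0}(M)=h^0(M,K_M)=0$, whereas $r\le 0$ makes $K_M=L^{\otimes(-r)}$ effective (as $L$ is), a contradiction. Thus $M$ is Fano of index $r\ge 1$, and by adjunction $-K_D=(r-1)\,L|_D$, so $D$ is a cohomology projective space which is Fano of index $r-1$.

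The core step determines $r$. Since a cohomology $\mathbb{P}^m$ has exactly the Hodge numbers of $\mathbb{P}^m$, the quantities $\chi^p$ entering the Libgober--Wood identity
\[
\sum_{p=0}^{m}(-1)^p\Bigl(p-\tfrac m2\Bigr)^2\chi^p=\tfrac{m}{12}\,c_m+\tfrac16\,c_1c_{m-1}
\]
are those of projective space; applying it with $m=n$ to $M$ and with $m=n-1$ to $D$, and using $c_n(M)=\chi(M)=n+1$ and $c_{n-1}(D)=\chi(D)=n$, yields $r\,c_{n-1}(M)=\tfrac{n(n+1)^2}{2}$ and $(r-1)\,c_{n-2}(D)=\tfrac{(n-1)n^2}{2}$ (writing a class $c_k=(\text{integer})x^k$ by that integer). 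The normal-bundle identity $c(M)|_D=c(D)(1+x_0)$ gives, in top degree, $c_{n-1}(M)=c_{n-2}(D)+n$. Eliminating the two Chern numbers produces
\[
2r^2-3(n+1)r+(n+1)^2=0,\qquad\text{so}\qquad r=n+1\ \text{ or }\ r=\tfrac{n+1}{2},
\]
and in the second case one reads off $c_{n-1}(M)=n(n+1)x^{n-1}$ and $c_{n-2}(D)=n^2x_0^{n-2}$, exactly as stated.

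It remains to decide which root occurs. If $r=n+1$, the index of $M$ equals $n+1$, so Kobayashi--Ochiai gives $M\cong\mathbb{P}^n$ with $D\in|\mathcal{O}(1)|$ a hyperplane; this settles everything once the value $r=\tfrac{n+1}{2}$ is excluded whenever $n\not\equiv 3\pmod 4$. That value is an integer only for $n$ odd, which already disposes of $n\equiv 0,2\pmod 4$. The remaining case $n\equiv 1\pmod 4$ is, I expect, the main obstacle. Here I would push the Fano structure harder: Kodaira vanishing forces $\chi(M,L^t)=0$ for $-r<t<0$, while Serre duality gives the symmetry $\chi(M,L^t)=(-1)^n\chi(M,L^{-r-t})$. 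Hence the Hilbert polynomial $P(t)=\chi(M,L^t)$ is a degree-$n$ polynomial with leading coefficient $1/n!$, with roots $-1,\dots,-(r-1)$, antisymmetric about $-r/2$, and normalized by $P(0)=\chi(\mathcal{O}_M)=1$. These rigid constraints nearly pin down $P$, and the Riemann--Roch requirement that $P$ take integer values at every integer is then overdetermined. I expect the resulting $2$-adic/integrality analysis to admit a consistent $P$ precisely when $n\equiv 3\pmod 4$, thereby excluding $r=\tfrac{n+1}{2}$ for $n\equiv 1\pmod 4$; for example, when $n=5$ the constraints determine $P$ uniquely and force $P(1)=\tfrac{11}{2}\notin\mathbb{Z}$. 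Carrying out this integrality dichotomy cleanly for all $n\equiv 1\pmod4$—where, in contrast to $n=5$, genuine free parameters in $P$ survive—is the step I anticipate will demand the most work.
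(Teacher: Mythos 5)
Your first two stages coincide with the paper's own proof (its Sections 2--3): establish $H^{\ast}(M;\mathbb{Z})\cong\mathbb{Z}[x]/(x^{n+1})$ and $H^{\ast}(D;\mathbb{Z})\cong\mathbb{Z}[x_0]/(x_0^{n})$, apply the Libgober--Wood identity to both $M$ and $D$, combine with the adjunction relations $c_1(M)=c_1(D)+1$ and $c_{n-1}(M)=c_{n-2}(D)+n$, and solve the resulting quadratic to get $r=n+1$ or $r=\tfrac{1}{2}(n+1)$, from which the data in part (2) are read off correctly. The genuine gap is that you never prove part (1) in the crucial case $n\equiv1\pmod 4$: you propose to exclude $r=\tfrac{1}{2}(n+1)$ by an integrality analysis of the Hilbert polynomial $P(t)=\chi(M,L^t)$, verify it only for $n=5$ (your computation $P(1)=\tfrac{11}{2}$ is indeed correct), and explicitly defer the general case, acknowledging that for larger $n$ free parameters in $P$ survive. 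That deferred step is exactly the theorem's new content beyond the even-dimensional case (where $r=\tfrac{1}{2}(n+1)$ is not even an integer), so what you have is a reduction plus a conjecture, not a proof; and it is far from clear the Diophantine analysis closes, since your constraints (prescribed roots, antisymmetry, $P(0)=1$, leading coefficient $1/n!$) underdetermine $P$ by roughly $n/4$ coefficients. The paper finishes instead with a short topological argument: $M$ is Fano, hence simply connected, and has the integral cohomology ring of $\mathbb{P}^n$, so it is homotopy equivalent to $\mathbb{P}^n$ (classify $x$ by a map to $K(\mathbb{Z},2)=\mathbb{P}^{\infty}$, push it into the $2n$-skeleton $\mathbb{P}^n$ by cellular approximation, and conclude via degree, Poincar\'e duality and the Whitehead theorem); since Stiefel--Whitney classes are homotopy invariants by Wu's formula, $w_2(M)=w_2(\mathbb{P}^n)$, i.e. $\tfrac{1}{2}(n+1)\equiv n+1\pmod 2$, which forces $n\equiv3\pmod 4$. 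This argument could be grafted onto your setup essentially verbatim, and it is what your proposal is missing.

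There is also a smaller but real gap at the start. You assert that the homology-cell hypothesis is equivalent, via Poincar\'e duality, to acyclicity of $U=M\setminus D$, and then extract $H^1(M)=0$ from that acyclicity; this is circular. Poincar\'e duality does translate the hypothesis into the Gysin maps $i_!\colon H^{k-2}(D;\mathbb{Z})\to H^{k}(M;\mathbb{Z})$ being isomorphisms, but the Gysin sequence then yields only $H^k(U)=0$ for $k\geq2$ together with $H^1(U)\cong H^1(M)$; nothing purely topological kills $H^1(M)$. Since $b_1(M)=0$ is needed for the vanishing of the odd cohomology (hence for ``$M$ has the Betti numbers of $\mathbb{P}^n$'', the input to Libgober--Wood), for $\mathrm{Pic}(M)=\mathbb{Z}L$, and for $\chi(\mathcal{O}_M)=1$, you must actually prove it. The paper does this with complex geometry, not topology (Theorem \ref{Fujita theorem}(2)): if $b_1(M)\neq0$, the Albanese image cannot have dimension $\geq2$ (a holomorphic $2$-form would contradict $b_2(M)=1$ via Hodge decomposition), nor can it be a curve (generic fibers would be effective non-ample divisors, contradicting $H^2(M;\mathbb{Z})\cong\mathbb{Z}$). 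Adding this Albanese argument would repair that step of your proposal; the missing case $n\equiv1\pmod4$ remains the essential defect.
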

\begin{remark}
The condition ampleness in Conjecture \ref{Bn} turns out to be redundant. When $n\equiv3 \pmod{4}$, the Fano index of $D$ in the non-standard case is $\frac{1}{2}(n+1)-1=\frac{1}{2}(n-1)$, which is half of $\dim D$. There are structural results for Fano $n$-folds of second Betti number $b_2\geq2$ with Fano indices $\frac{1}{2}(n+1)$ ($n$ odd) and $\frac{1}{2}n$ ($n$ even) respectively (\cite{Wis91}, \cite{Wis93}). We suspect that the non-standard case may not occur. The last section discusses this case in detail.
\end{remark}

As a consequence, Conjecture $(A_n)$ is also true when $n\not\equiv3 \pmod{4}$. Indeed, Conjecture $(A_n)$ can be solved in a more general setting which is a reformulation of the previous theorem
via some basic topological considerations.
\begin{theorem}\label{secondmainresult}
Let $U$ be an $n$-dimensional open complex manifold which is homology trivial (namely the reduced homology $\widetilde{H}_i(U;\mathbb{Z})=0$ for all $i$), and $(M,D)$ its K\"{a}hler smooth compactification. Then the conclusions in Theorem \ref{firstmainresult} hold.
In particular, Conjecture \ref{An} is true whenever $n\not\equiv3 \pmod{4}$.
\end{theorem}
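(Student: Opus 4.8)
The plan is to reduce Theorem~\ref{secondmainresult} to Theorem~\ref{firstmainresult} by a chain of elementary topological observations, so that the only genuinely new ingredient is the passage from the K\"ahler to the projective hypothesis on $M$. Set $U=M\setminus D$; it is connected (being homology trivial), $M$ is then connected, and both $M$ (real dimension $2n$) and the smooth divisor $D$ (real dimension $2n-2$) are compact and oriented. First I would run the long exact sequence of compactly supported cohomology attached to the decomposition of $M$ into the open set $U$ and the closed set $D$,
\[
\cdots \longrightarrow H^k_c(U)\longrightarrow H^k(M)\xrightarrow{\ i^*\ }H^k(D)\longrightarrow H^{k+1}_c(U)\longrightarrow\cdots,
\]
and feed in Poincar\'e duality $H^k_c(U)\cong H_{2n-k}(U)$ for the oriented open manifold $U$. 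Since $\widetilde H_\ast(U;\mathbb{Z})=0$, we get $H^k_c(U)=0$ for $k\le 2n-1$, whence $i^*\colon H^k(M;\mathbb{Z})\to H^k(D;\mathbb{Z})$ is an isomorphism for $0\le k\le 2n-2$. By the universal coefficient theorem this is equivalent to the bijectivity of $i_\ast\colon H_k(D;\mathbb{Z})\to H_k(M;\mathbb{Z})$ in the same range, i.e.\ to $M\setminus D$ being a homology cell; the case $k=0$ also shows that $D$ is connected.

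Next I would extract $b_2(M)=1$. Taking $k=2n-2$ above and using that $D$ is a closed connected oriented $(2n-2)$-manifold gives $H^{2n-2}(M;\mathbb{Z})\cong H^{2n-2}(D;\mathbb{Z})\cong\mathbb{Z}$, and Poincar\'e duality on $M$ then yields $H_2(M;\mathbb{Z})\cong H^{2n-2}(M;\mathbb{Z})\cong\mathbb{Z}$, so $b_2(M)=1$. This is the hinge for the one non-formal step. Since $M$ is compact K\"ahler with $b_2=1$, its Hodge decomposition $H^2(M;\mathbb{C})=H^{2,0}\oplus H^{1,1}\oplus H^{0,2}$ is one-dimensional; the class of a K\"ahler form is a nonzero element of $H^{1,1}\cap H^2(M;\mathbb{R})$, forcing $h^{1,1}=1$ and $h^{2,0}=h^{0,2}=0$. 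Thus $H^2(M;\mathbb{R})=H^{1,1}(M;\mathbb{R})$ is one-dimensional, so the open, nonempty K\"ahler cone meets $H^2(M;\mathbb{Q})$; a rational K\"ahler class is, after scaling, an integral $(1,1)$-class, and the Kodaira embedding theorem makes $M$ projective.

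At this point $(M,D)$ is a projective manifold with $D$ a smooth divisor whose complement is a homology cell, so Theorem~\ref{firstmainresult} applies directly and the conclusions of its parts (1) and (2) hold, proving Theorem~\ref{secondmainresult}. For the final assertion, a contractible $U$ is in particular homology trivial, so for $n\not\equiv 3\pmod 4$ part~(1) forces $M\cong\mathbb{P}^n$ with $D$ a hyperplane, whence $U\cong\mathbb{P}^n\setminus\mathbb{P}^{n-1}\cong\mathbb{C}^n$ and Conjecture~\ref{An} follows. I expect no serious obstacle here: the homology cell condition and the homology triviality of $U$ are two faces of the same Poincar\'e-duality computation, and the only place analysis enters is the standard criterion that a compact K\"ahler manifold with $b_2=1$ (equivalently $h^{2,0}=0$) is projective.
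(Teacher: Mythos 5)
Your overall strategy is the same as the paper's --- kill the (co)homology of the pair $(M,D)$ by duality on the homology-trivial open part $U$, then feed the resulting homology cell condition into Theorem~\ref{firstmainresult} --- and your extra paragraph on projectivity (Kähler plus $b_2(M)=1$ forces $h^{2,0}=0$, hence a rational Kähler class and projectivity by Kodaira embedding) is correct; it makes explicit a point the paper leaves implicit, since Theorem~\ref{firstmainresult} is stated for projective $M$ while the machinery behind it (Theorem~\ref{Fujita theorem}) is proved under the Kähler hypothesis and yields ampleness of $D$, hence projectivity. However, there is one genuine gap: the sentence ``By the universal coefficient theorem this is equivalent to the bijectivity of $i_\ast\colon H_k(D;\mathbb{Z})\to H_k(M;\mathbb{Z})$ in the same range.'' This equivalence is false as a general principle: an isomorphism on integral cohomology in degrees $0\le k\le N$ does \emph{not} imply an isomorphism on integral homology in degrees $0\le k\le N$, because torsion shifts degree by one under the universal coefficient theorem. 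For instance, the constant map from a Moore space $M(\mathbb{Z}/2,N)$ to a point induces isomorphisms on $H^k(\cdot\,;\mathbb{Z})$ for all $k\le N$, yet $H_N(M(\mathbb{Z}/2,N);\mathbb{Z})=\mathbb{Z}/2\to 0$ is not injective. In your situation, running the mapping cone of $i$ through the universal coefficient theorem, the cohomology isomorphisms for $k\le 2n-2$ only yield: $i_\ast$ bijective for $k\le 2n-4$, surjective with a possibly nonzero torsion kernel in degree $2n-3$, and with uncontrolled kernel and torsion cokernel in degree $2n-2$. The degrees $2n-3$ and $2n-2$ lie inside the range $0\le k\le 2(n-1)$ that the homology cell condition (and hence Lemma~\ref{lemma-isomorphism}) requires, so this is not a harmless boundary effect.

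The gap is repairable, but it takes more than citing the universal coefficient theorem. One fix within your framework: push the compactly supported sequence to the top, noting $H^{2n-1}_c(U)=0$ and that $H^{2n}_c(U)\to H^{2n}(M)$ is an isomorphism (both are $\mathbb{Z}$, compatibly with the orientation), so that $i^\ast$ is bijective for all $k\le 2n-1$; then the mapping cone of $i$ has reduced cohomology only in degree $2n$, hence (by finite generation and UCT) reduced homology only in degree $2n$, and the homology long exact sequence gives $i_\ast$ bijective for all $k\le 2n-2$. Alternatively --- and this is exactly what the paper does, which is why its proof is shorter --- work in homology from the start: Poincaré--Alexander--Lefschetz duality for the pair gives $H_k(M,D;\mathbb{Z})\cong H^{2n-k}(M\setminus D;\mathbb{Z})=0$ for $0\le k\le 2n-1$, where the universal coefficient theorem is used only in the unproblematic direction (trivial homology of $U$ implies trivial cohomology of $U$), and then the homology long exact sequence of $(M,D)$ immediately gives the bijectivity of $i_\ast$ in the required range, with no cohomology-to-homology conversion at all.
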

\begin{corollary}\label{main coro}
Let $M$ be a compact K\"{a}hler manifold of dimension $n\not\equiv3\pmod{4}$, and $D\subset M$ a smooth (connected) divisor such that $M\backslash D\cong\mathbb{C}^n$. Then $M\cong\mathbb{P}^n$ and $D$ is a hyperplane on it.
\end{corollary}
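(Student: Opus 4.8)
The plan is to read Corollary~\ref{main coro} off Theorem~\ref{secondmainresult}, so that essentially no new work is required beyond matching hypotheses. First I would set $U:=M\setminus D$, which by assumption is biholomorphic to $\mathbb{C}^n$. Since $\mathbb{C}^n$ is contractible, its reduced singular homology vanishes in every degree, so $U$ is homology trivial in the sense of Theorem~\ref{secondmainresult}. The pair $(M,D)$ is then a compactification of $U$ in the sense defined above: $M$ is compact by hypothesis, $D$ is an analytic subvariety (indeed a smooth divisor), and $M\setminus D\cong U$. It is moreover \emph{smooth} because $D$ is smooth and \emph{K\"{a}hler} because $M$ is K\"{a}hler.

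Having verified the hypotheses, I would simply invoke Theorem~\ref{secondmainresult} to conclude that the dichotomy of Theorem~\ref{firstmainresult} holds for the pair $(M,D)$. Because the dimension satisfies $n\not\equiv 3\pmod 4$, alternative (2) is excluded and only alternative (1) survives, giving $M\cong\mathbb{P}^n$ with $D$ a hyperplane. This is exactly the assertion of the corollary, so the proof would consist of little more than this specialization.

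The only point that deserves comment — and the closest thing here to an obstacle — is ensuring that the topological input of Theorem~\ref{secondmainresult} is legitimately triggered, namely that homology-triviality of $U=M\setminus D$ is precisely what encodes the homology-cell condition that $H_k(D;\mathbb{Z})\to H_k(M;\mathbb{Z})$ is bijective for $0\leq k\leq 2(n-1)$. This is the content of the ``basic topological considerations'' already flagged before Theorem~\ref{secondmainresult} (Alexander--Lefschetz duality together with the long exact sequence of the pair $(M,D)$), and it is exactly what Theorem~\ref{secondmainresult} absorbs; for the corollary it may therefore be cited rather than re-proved. I would also note that the connectedness of $D$ is forced rather than assumed: contractibility of $M\setminus D$ yields $b_2(M)=1$, which as recalled in the discussion of Problem~$27$ is equivalent to the irreducibility of $D$.
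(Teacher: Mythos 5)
Your proposal is correct and matches the paper's (implicit) argument exactly: the corollary is read off from Theorem \ref{secondmainresult} by taking $U=M\setminus D\cong\mathbb{C}^n$, noting that contractibility of $\mathbb{C}^n$ gives homology triviality, and using $n\not\equiv 3\pmod 4$ to exclude the second alternative of Theorem \ref{firstmainresult}. Nothing further is needed.
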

\begin{remark}
As already mentioned,
Corollary \ref{main coro} was shown without assumption on the dimension by Chi Li and Zhou in \cite{LZ}, whose methods are completely different.
\end{remark}

We will further give an application of Theorem \ref{firstmainresult}.
Sommese showed in \cite{So} that there are severe restrictions on a projective manifold if it can be realized as an ample divisor in some other projective manifold. Fujita further improved in \cite{Fu80-1} some of Sommese's results and answered some questions and conjectures raised in \cite{So}. As remarked in \cite[Remark 2]{Fu80-2}, a positive answer to Conjecture \ref{Bn} would lead to the following result, which solves \cite[Question 4.5]{Fu80-1} and gives a sharpened form of \cite[p.64, Prop.5]{So}.
\begin{theorem}\label{application-fibration}
Let $D$ be a smooth ample divisor in a projective manifold $M$ and $f: D\longrightarrow S$ a holomorphic mapping of maximal rank at every point onto a compact complex manifold $S$. Assume $f$ extends holomorphically to $F: M\longrightarrow S$. Then $\dim M\geq2\dim S$. If moreover $\dim M=2\dim S$ or $2\dim S+1$, and $n\not\equiv3 \pmod{4}$, both $f$ and $F$ are fiber bundles with fibers being isomorphic to projective spaces so that each fiber of $f$ is a hyperplane on the respective fiber of $F$.
\end{theorem}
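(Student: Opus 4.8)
The plan is to pass to the general fibres of $F$ and apply Theorem \ref{firstmainresult} fibrewise. Write $n=\dim M$, $s=\dim S$ and $m=n-s$. Since $D$ is ample, $M\setminus D$ is affine. As $f=F|_D$ is a submersion, $dF$ is already surjective along $D$, so the critical locus $Z=\{x\in M:\ dF_x\text{ is not surjective}\}$ is a closed analytic subset disjoint from $D$; being compact and contained in the affine variety $M\setminus D$, it is finite. Hence for general $s\in S$ the fibre $M_s:=F^{-1}(s)$ is a smooth projective manifold of dimension $m$, the fibre $D_s:=f^{-1}(s)=D\cap M_s$ is smooth of dimension $m-1$ (by Ehresmann, $f$ is even a smooth fibre bundle), and $D_s$ is an ample divisor on $M_s$, since ampleness restricts to subvarieties. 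Thus each $(M_s,D_s)$ is a candidate for Theorem \ref{firstmainresult}, with $M_s\setminus D_s$ affine of dimension $m$.

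The crux, and the main obstacle, is a pair of positivity/topology inputs that I would take from Sommese \cite{So}. Both come from one tight dimension count. The complement $M_s\setminus D_s$ is the fibre over $s$ of the induced map $M\setminus D\to S$, whose total space is affine of dimension $n=s+m$ and whose fibre is affine of dimension $m$, over the compact oriented base $S$ of dimension $s$. Affineness forces $H_k(M\setminus D;\mathbb{Z})=0$ for $k>n$ and $H_j(M_s\setminus D_s;\mathbb{Z})=0$ for $j>m$, so in the Leray--Serre spectral sequence of $M\setminus D\to S$ the fibre homology is pinched between the base dimension $s$ and the affine bound $m$. If $m<s$ this already contradicts $M_s\setminus D_s\ne\emptyset$ (the class in bottom degree has nowhere to survive), which yields the inequality $n\ge 2s$. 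In the extremal range $n=2s$ or $n=2s+1$, where $m=s$ or $m=s+1$, the same pinching forces $M_s\setminus D_s$ to be $\mathbb{Z}$-acyclic in the relevant range; by Lefschetz duality for the pair this is exactly the bijectivity of $H_k(D_s)\to H_k(M_s)$ for $0\le k\le 2(m-1)$, i.e.\ $M_s\setminus D_s$ is a homology cell. Making this estimate rigorous -- controlling the monodromy local systems and the extremal differentials -- is the hard part, and is the topological content of \cite{So}.

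With the homology-cell property established I would apply Theorem \ref{firstmainresult} to $(M_s,D_s)$. When $m\not\equiv 3\pmod 4$, part (1) gives immediately $M_s\cong\mathbb{P}^m$ with $D_s$ a hyperplane. The dimensional bookkeeping needs care here: the hypothesis is phrased for $n=\dim M$, whereas the theorem is being applied in the fibre dimension $m=n-s$, and there remain sub-cases of $\dim M=2s$ or $2s+1$ in which $m\equiv 3\pmod 4$. In those I must still exclude the non-standard Fano fibre of Theorem \ref{firstmainresult}(2); I would do so using the additional constraints available to a fibre of such a family -- Picard number one together with the index $\tfrac12(m+1)$ and the Chern-number conditions of part (2) -- played against Wi\'sniewski's structure theorems (\cite{Wis91}, \cite{Wis93}) cited in the Remark, so that only $M_s\cong\mathbb{P}^m$ with $D_s$ a hyperplane survives.

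It remains to globalise. Every general fibre of $F$ is now $\cong\mathbb{P}^m$ with $D_s\cong\mathbb{P}^{m-1}$ a hyperplane. Since $D_s=f^{-1}(s)$ is smooth of dimension $m-1$ and meets every positive-dimensional subvariety of $M_{s}$ (ampleness), no component of any fibre of $F$ can have dimension exceeding $m$; combined with the lower bound $\dim_xF^{-1}(s)\ge m$ this makes $F$ equidimensional, hence flat by miracle flatness, with any singular fibre smooth along its ample divisor and singular only along the finite set $Z$. The infinitesimal rigidity $H^1(\mathbb{P}^m,T_{\mathbb{P}^m})=0$ of projective space, together with the smoothness of $M$, then forbids degenerate fibres, so $Z=\emptyset$ and $F$ is a proper holomorphic submersion. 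By the Fischer--Grauert theorem a proper submersion all of whose fibres are biholomorphic to $\mathbb{P}^m$ is a holomorphic fibre bundle; the same conclusion for $f$ gives a $\mathbb{P}^{m-1}$-bundle, and the fibrewise statement that $D_s$ is a hyperplane in $M_s$ shows each fibre of $f$ is a hyperplane in the corresponding fibre of $F$. The principal difficulty throughout is the fibrewise homology-cell property of the second paragraph; excluding the non-standard Fano fibre and the degenerate fibres are secondary, though they still require the rigidity and classification inputs indicated above.
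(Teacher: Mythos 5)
Your skeleton---reduce to the fibres of $F$ and apply Theorem \ref{firstmainresult} fibrewise, with the topological input taken from Sommese---is exactly the paper's: its entire proof consists of citing \cite[p.64, Prop.5]{So} for $\dim M\geq 2\dim S$, citing the last two lines of \cite[p.64]{So} for the bijectivity of $H_k(D_x;\mathbb{Z})\to H_k(M_x;\mathbb{Z})$ for $0\leq k\leq 2\dim D_x$ and for \emph{every} $x\in S$, and then applying Theorem \ref{firstmainresult} to each pair $(M_x,D_x)$. Measured against that, two of your steps genuinely fail. The first is the Wi\'sniewski detour for the sub-cases in which the fibre dimension $m\equiv 3\pmod 4$. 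The non-standard alternative in Theorem \ref{firstmainresult}(2) is a Fano manifold of \emph{Picard number one} and index $\frac{1}{2}(m+1)$; Wi\'sniewski's structure theorems \cite{Wis91}, \cite{Wis93} concern Fano manifolds of second Betti number $b_2\geq 2$ (this is precisely how the remark following Theorem \ref{firstmainresult} invokes them), so they say nothing about this case. Excluding the Picard-rank-one alternative is exactly the open problem to which the paper devotes Section \ref{approcach to n=3 mod4}; neither the paper nor your proposal can do it. The resolution is interpretive rather than mathematical: the hypothesis ``$n\not\equiv 3\pmod 4$'' in Theorem \ref{application-fibration} has to be read as a condition on the fibre dimension $\dim M-\dim S$, since that is the only reading under which the paper's proof (and yours) is valid; the bad sub-cases are then excluded by hypothesis, and your Wi\'sniewski paragraph should be deleted rather than repaired.

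The second failure is your globalization. Because you establish the homology-cell property, hence $M_s\cong\mathbb{P}^m$, only for \emph{general} fibres, you must afterwards exclude degenerate fibres, and you do so by appealing to $H^1(\mathbb{P}^m,T_{\mathbb{P}^m})=0$ together with smoothness of $M$. That is not a valid deduction: infinitesimal rigidity controls deformations of the smooth fibre, not flat limits. A flat projective family with smooth total space whose general fibres are $\mathbb{P}^m$ can have a degenerate special fibre---blow up $\mathbb{P}^m\times\Delta$ along a point of the central fibre to get a reduced but reducible central fibre, or recall that the Veronese $\mathbb{P}^2\subset\mathbb{P}^5$ degenerates flatly to the cone over a rational normal quartic. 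Ruling out such degenerations here requires the extra geometry (the finite critical locus and the smooth ample divisor $D_x\cong\mathbb{P}^{m-1}$ inside the limit fibre), which is essentially Sommese's own argument: his Proposition 5 covers \emph{all} fibre pairs at once (in particular, the extension $F$ is everywhere of maximal rank as soon as $f$ is, so every $M_x$ is smooth), and that is what the paper uses, the bundle structure then following from Fischer--Grauert as you say. Your opening step (finiteness of the critical locus via affineness of $M\setminus D$) and your deferral of the spectral-sequence estimate to \cite{So} are fine and consistent with the paper's sources.
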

\begin{remark}
When $\dim D-\dim S\geq2$, it turns out that the extension $F$ always exists (\cite[p.61, Prop.3]{So}).
\end{remark}

This note is structured as follows. Since \cite[Thm 2]{Fu80-2} is crucial to establishing the main results, a detailed proof will be provided in Section \ref{preliminary} for the reader's convenience. In Section \ref{proof of Peternell results} we apply Fujita's result and a Chern number identity to narrow down the first Chern classes of the pair $(M,D)$ in question to two possible cases. Then Section \ref{proof of main results} is devoted to the proof of our main results. Finally in Section \ref{approcach to n=3 mod4} we discuss approaches to the open case $\dim M \equiv 3 \pmod 4$ and give some partial results.

The current note is based on a combination of the two recent arXiv papers \cite{Pe} and \cite{Li25} due to the second-named and the first-named author respectively. Section \ref{proof of Peternell results}, especially Proposition \ref{Peternell proposition}, is taken from \cite{Pe}, where the second-named author solves Conjecture \ref{Bn} when $n$ are even. Section \ref{proof of main results} is taken from \cite{Li25}, where the first-named author pushes forward the result to include the case $n\equiv1\pmod{4}$.

\section{Preliminaries}\label{preliminary}
In this section we assume first that $M$ is a $2n$-dimensional closed (connected) oriented (topological) manifold with $n\geq2$, and $D\overset{i}{\hookrightarrow}M$ a $2(n-1)$-dimensional closed (connected) oriented submanifold in $M$. Let
$$P_M(\cdot):=(\cdot)\cap[M]:~H^k(M;\mathbb{Z})\overset{\cong}{\longrightarrow}
H_{2n-k}(M;\mathbb{Z})\quad(0\leq k\leq 2n)$$
be the Poincar\'{e} duality of $M$, where $[M]$ is the fundamental class of $M$ determined by the orientation and $``\cap"$ the cap product. Let \be\label{poincare dual to x}x:=P_M^{-1}\big(i_{\ast}([D])\big)\in H^2(M;\mathbb{Z})\ee be the Poincar\'{e} dual of $D$ in $M$, i.e., $x\cap[M]=i_{\ast}([D])$. Let $x_0:=i^{\ast}(x)\in H^2(D;\mathbb{Z})$ be the restriction of $x$ to $D$.

With the notation above understood, we have the following lemma.
\begin{lemma}\label{lemma-isomorphism}
Assume that the natural homomorphism
$$H_k(D;\mathbb{Z})\overset{i_{\ast}}{\longrightarrow} H_k(M;\mathbb{Z})$$ induced by $D\overset{i}{\hookrightarrow}M$ is bijective for $0\leq k\leq 2(n-1)$, i.e.,
$M \setminus D$ is a homology cell.
\begin{enumerate}
\item
The even-dimensional cohomology rings of $M$ and $D$ are given by
$H^{2\ast}(M;\mathbb{Z})=\mathbb{Z}[x]/(x^{n+1})$ and $H^{2\ast}(D;\mathbb{Z})=\mathbb{Z}[x_0]/(x_0^{n})$.

\item
If furthermore the first Betti number $b_1(M)=0$, their cohomology rings are given by
$H^{\ast}(M;\mathbb{Z})=\mathbb{Z}[x]/(x^{n+1})$ and $H^{\ast}(D;\mathbb{Z})=\mathbb{Z}[x_0]/(x_0^{n})$.
\end{enumerate}
\end{lemma}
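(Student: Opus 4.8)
The plan is to translate the homology-cell hypothesis into two cohomological isomorphisms and then combine them, via the projection formula, into a purely topological substitute for hard Lefschetz: cup product with $x$ is an isomorphism in the relevant range. Since $M$ and $D$ are both oriented, the normal bundle of $D$ is an oriented rank-$2$ bundle, so the Thom isomorphism, the Gysin (wrong-way) map $i_!\colon H^j(D)\to H^{j+2}(M)$, and Poincar\'e duality are all available over $\mathbb{Z}$; I record the two standard facts $i_!(1)=x$ (the Thom class restricts to the Poincar\'e dual of $[D]$, cf. \eqref{poincare dual to x}) and $i^*(x)=x_0$.

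First I would produce the two isomorphisms. The Gysin map $i_!\colon H^j(D)\to H^{j+2}(M)$ is Poincar\'e dual to the pushforward $i_*\colon H_{(2n-2)-j}(D)\to H_{(2n-2)-j}(M)$; as $j$ runs over $0,\dots,2n-2$ so does $(2n-2)-j$, so the hypothesis yields $i_!\colon H^j(D)\xrightarrow{\cong}H^{j+2}(M)$ for $0\le j\le 2n-2$. Next, applying the universal coefficient theorem together with the five lemma to the homology isomorphisms $i_*\colon H_k(D)\xrightarrow{\cong}H_k(M)$, and using that the relevant comparison in degree $k$ needs the homology isomorphism in degrees $k$ and $k-1$ (both in range), I get that the restriction $i^*\colon H^k(M)\xrightarrow{\cong}H^k(D)$ is an isomorphism for $0\le k\le 2n-2$.

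Now comes the crux. By the projection formula $i_!\circ i^*=\,\cup x$ as operators on $H^*(M)$, and for $0\le k\le 2n-2$ both factors are isomorphisms by the previous step, so $\cup x\colon H^k(M)\xrightarrow{\cong}H^{k+2}(M)$ in that range. Iterating $\cup x$ from $H^0(M)=\mathbb{Z}\langle 1\rangle$ gives $H^{2j}(M)=\mathbb{Z}\langle x^j\rangle$ for $0\le j\le n$; since $M$ is closed oriented, $H^{2n}(M)=\mathbb{Z}$ is generated by $x^n$ while $x^{n+1}\in H^{2n+2}(M)=0$, so $H^{2*}(M)=\mathbb{Z}[x]/(x^{n+1})$. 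Transporting along $i^*$ (an isomorphism through degree $2n-2$, sending $x^j\mapsto x_0^j$) gives $H^{2j}(D)=\mathbb{Z}\langle x_0^j\rangle$ for $0\le j\le n-1$, and as $D$ is closed oriented of real dimension $2n-2$, $H^{2n-2}(D)=\mathbb{Z}$ is generated by $x_0^{n-1}$ with $x_0^{n}\in H^{2n}(D)=0$; this proves (1). For (2), if $b_1(M)=0$ then $H^1(M;\mathbb{Z})=0$ (it is torsion-free), and since $\cup x$ identifies $H^{2j+1}(M)\cong H^1(M)$ throughout the range, all odd cohomology of $M$ vanishes; restricting across $i^*$ kills the odd cohomology of $D$ as well, upgrading the statements to the full rings $H^*(M)=\mathbb{Z}[x]/(x^{n+1})$ and $H^*(D)=\mathbb{Z}[x_0]/(x_0^{n})$.

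The main obstacle is conceptual rather than computational: the key realization is that the homology-cell condition alone forces $\cup x$ to be an isomorphism, so that the projective-space ring structure is obtained topologically and without any K\"ahler input. The remaining care is bookkeeping — keeping straight that $i_!$ is the Poincar\'e dual of $i_*$ (with the correct orientation on the normal bundle) while $i^*$ is its universal-coefficient dual, and tracking the Ext-terms and degree ranges so that the two isomorphisms overlap exactly where $\cup x$ is applied.
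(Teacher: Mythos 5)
Your proposal is correct and is essentially the paper's own argument: the paper defines $\tilde{i}:=P_M^{-1}\circ i_{\ast}\circ P_D$ (which is exactly your Gysin map $i_!$), proves $\tilde{i}\circ i^{\ast}=\cup\, x$ by the same cap-product/projection-formula computation, and deduces both the even-degree ring structure and, when $b_1(M)=0$, the vanishing of odd cohomology via the universal coefficient theorem, just as you do. The only cosmetic difference is that you invoke the projection formula and Thom class by name where the paper verifies the identity $\tilde i \circ i^*(\theta) = \theta \cup x$ directly.
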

\begin{proof}
It is well-known that the isomorphisms $i_{\ast}$ imply isomorphisms on cohomology (\cite[Cor.3.4]{Ha}) \be\label{isomorphism on cohomology}H^k(M;\mathbb{Z})\overset{i^{\ast}}{\underset{\cong}{\longrightarrow}} H^k(D;\mathbb{Z}),\quad0\leq k\leq 2(n-1).\ee

Let
$\tilde{i}: H^k(D;\mathbb{Z})\overset{\cong}{\longrightarrow} H^{k+2}(M;\mathbb{Z})$ be the isomorphism so that the following diagram commutes:
\be\label{diagram1}\begin{diagram}
H^k(M;\mathbb{Z})\overset{i^{\ast}}{\underset{\cong}{\longrightarrow}} &H^k(D;\mathbb{Z}) &\overset{\tilde{i}}{\underset{\cong}{\longrightarrow}}  &H^{k+2}(M;\mathbb{Z})\\
&\dTo_{P_D}^{\cong} & &\dTo_{P_M}^{\cong}\\
&H_{2n-2-k}(D;\mathbb{Z}) &\overset{i_{\ast}}{\underset{\cong}{\longrightarrow}}
&H_{2n-2-k}(M;\mathbb{Z}),
\end{diagram}
\qquad \big(0\leq k\leq 2(n-1)\big).\ee
In fact, we set  $\tilde{i}:=P_M^{-1}\circ i_{\ast}\circ P_D$. We assert that the composition $\tilde{i}\circ i^{\ast}$ in (\ref{diagram1}) is of the following form
\be\label{diagram2}\begin{diagram}
\tilde{i}\circ i^{\ast}:\quad&H^k(M;\mathbb{Z})&\overset{\cong}{\longrightarrow}
&H^{k+2}(M;\mathbb{Z})\\
&\theta&\longmapsto
&\theta\cup x
\end{diagram},\quad 0\leq k\leq 2(n-1).\ee
Indeed we have
\be\begin{split}
\tilde{i}\circ i^{\ast}(\theta)=P_M^{-1}\circ i_{\ast}\circ P_D\big(i^{\ast}(\theta)\big)&=P_M^{-1}\circ i_{\ast}
\big(i^{\ast}(\theta)\cap[D]\big)\\
&=P_M^{-1}\big(\theta\cap i_{\ast}([D])\big)\\
&=\theta\cup x,
\end{split}\nonumber
\ee
where the last equality holds since
$$P_M(\theta\cup x)=(\theta\cup x)\cap[M]=\theta\cap
\big(x\cap[M]\big)\overset{(\ref{poincare dual to x})}{=}\theta\cap i_{\ast}([D]).$$
This completes the proof of (\ref{diagram2}). The isomorphisms $\tilde{i}\circ i^{\ast}$ in (\ref{diagram2}) and $i^{\ast}$ in (\ref{isomorphism on cohomology}) imply that the even-dimensional cohomology rings of $M$ and $D$ are as in Part $(1)$ in Lemma \ref{lemma-isomorphism}.

If moreover $b_1(M)=0$, the universal coefficient theorem yields that $H^1(M;\mathbb{Z})=0$, which leads to $H^{\text{odd}}(M;\mathbb{Z})=0$ and $H^{\text{odd}}(D;\mathbb{Z})=0$, still due to the isomorphisms in (\ref{diagram2}) and (\ref{isomorphism on cohomology}). This completes the proof of Part $(2)$ in Lemma \ref{lemma-isomorphism}.
\end{proof}

With Lemma \ref{lemma-isomorphism} in hand, we can now prove the following crucial result due to Fujita (\cite[Thm 2]{Fu80-2}), whose proof borrowed some ideas from that in \cite[Prop.5]{So}.
\begin{theorem}[Fujita]\label{Fujita theorem}
Let $M$ be an $n$-dimensional compact K\"{a}hler manifold with $n\geq2$, $D\overset{i}{\hookrightarrow}M$ a smooth divisor such that $M \setminus D$ is a homology cell.
Then
\begin{enumerate}
\item
$x:=c_1(\mathcal O_M(D))>0$ and so $\text{Pic}(M)=\mathbb{Z}L_D$, where $\mathcal O_M(D)$ is the line bundle determined by the divisor $D$.

\item
$H^{\ast}(M;\mathbb{Z})=\mathbb{Z}[x]/(x^{n+1})$ and $H^{\ast}(D;\mathbb{Z})=\mathbb{Z}[x_0]/(x_0^{n})$, where as before $x_0:=i^{\ast}(x)$.
\end{enumerate}
\end{theorem}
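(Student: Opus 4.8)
The plan is to deduce the theorem from Lemma~\ref{lemma-isomorphism} together with two genuinely geometric inputs: the vanishing of $b_1(M)$ and the positivity of $x$. By Lemma~\ref{lemma-isomorphism}(1) the homology cell hypothesis already forces $H^{2\ast}(M;\mathbb{Z})=\mathbb{Z}[x]/(x^{n+1})$ and $H^{2\ast}(D;\mathbb{Z})=\mathbb{Z}[x_0]/(x_0^n)$; in particular $b_2(M)=1$, the class $x$ generates $H^2(M;\mathbb{Z})\cong\mathbb{Z}$, and $x^2\neq0$ in $H^4(M;\mathbb{Z})$ because $n\geq2$. Thus part (2) of the theorem will follow as soon as I show $b_1(M)=0$, for then Lemma~\ref{lemma-isomorphism}(2) applies verbatim. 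Part (1) will then follow by identifying $\mathrm{Pic}(M)$ with $H^2(M;\mathbb{Z})$ and pinning down the sign of $x$ against a K\"ahler class.

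First I would prove $b_1(M)=0$, which I expect to be the main obstacle. Purely topological reasoning is insufficient: the homology cell condition gives $i_\ast\colon H_1(D;\mathbb{Z})\xrightarrow{\ \cong\ }H_1(M;\mathbb{Z})$, so $b_1(M)=b_1(D)$, and the Gysin sequence merely propagates this symmetry rather than breaking it, so the K\"ahler structure must be used. Since $M$ is K\"ahler with $b_2(M)=1$ and the generator $x$ is of type $(1,1)$, the Hodge decomposition of $H^2$ forces $h^{2,0}(M)=0$, i.e. $M$ carries no nonzero holomorphic $2$-form. Hence $\alpha\wedge\beta=0$ for any $\alpha,\beta\in H^0(M,\Omega^1_M)$, since $\alpha\wedge\beta\in H^0(M,\Omega^2_M)=H^{2,0}(M)=0$. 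At each point the values of the holomorphic $1$-forms are therefore pairwise proportional, so the Albanese map $a\colon M\to\mathrm{Alb}(M)$ has rank $\leq1$ everywhere and $\dim a(M)\leq1$. If $\dim a(M)=1$, then $M$ admits a fibration $f\colon M\to C$ onto a curve with positive-dimensional fibers $F$; the fiber class satisfies $[F]^2=f^\ast([\mathrm{pt}]^2)=0$ in $H^4(M)$, whereas $[F]\neq0$ because $\int_F\omega^{n-1}>0$ for a K\"ahler form $\omega$. Writing $[F]=c\,x$ (legitimate since $b_2(M)=1$), $[F]\neq0$ gives $c\neq0$ and hence $[F]^2=c^2x^2\neq0$, since $x^2\neq0$ --- a contradiction. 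Therefore $a$ is constant and $b_1(M)=0$.

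It remains to establish part (1). With $b_1(M)=0$ we have $H^1(M,\mathcal{O}_M)=0$, so $\mathrm{Pic}^0(M)=0$ and $c_1\colon\mathrm{Pic}(M)\hookrightarrow H^2(M;\mathbb{Z})=\mathbb{Z}x$ is injective; as its image contains $c_1(\mathcal{O}_M(D))=x$, we obtain $\mathrm{Pic}(M)=\mathbb{Z}L_D$. For positivity I would compute the two relevant intersection numbers. Since $x$ generates $H^2$ and $x^n$ generates $H^{2n}(M;\mathbb{Z})\cong\mathbb{Z}$, one has $\epsilon:=x^n[M]=\pm1$, while the projection formula together with $x\cap[M]=i_\ast([D])$ gives $x_0^{n-1}[D]=x^n[M]=\epsilon$. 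Writing the K\"ahler class as $\omega=\lambda x$ with $\lambda\in\mathbb{R}\setminus\{0\}$, positivity of volumes yields $\lambda^n\epsilon=\int_M\omega^n>0$ and $\lambda^{n-1}\epsilon=\int_D(i^\ast\omega)^{n-1}>0$; dividing these gives $\lambda>0$ (and $\epsilon=1$). Hence $x=\lambda^{-1}\omega$ is a positive multiple of a K\"ahler class, i.e. $x>0$; by the Kodaira embedding theorem $L_D=\mathcal{O}_M(D)$ is ample and $M$ is projective. This proves $x>0$ and, with the above, $\mathrm{Pic}(M)=\mathbb{Z}L_D$, completing part (1).

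The difficulty is concentrated in the vanishing $b_1(M)=0$: the homology cell condition is symmetric in the low-degree homology of $M$ and $D$, so this cannot be read off from the exact sequences alone. The resolution above uses the K\"ahler hypothesis twice --- once to force $h^{2,0}(M)=0$ and once to see $[F]\neq0$ --- and crucially uses the ring relation $x^2\neq0$ (equivalently $b_4(M)=1$) furnished by Lemma~\ref{lemma-isomorphism} to rule out an Albanese fibration over a curve.
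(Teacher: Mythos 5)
Your proof is correct and follows essentially the same route as the paper: the crux, $b_1(M)=0$, is obtained exactly as in the paper via the Albanese map (no holomorphic $2$-forms because $b_2(M)=1$ plus Hodge decomposition, and a fibration over a curve excluded by a fiber-divisor contradiction), while positivity of $x$ is deduced from the effectivity of $D$. The only cosmetic differences are that you derive the curve-case contradiction from $[F]^2=0$ versus $x^2\neq0$ in $\mathbb{Z}[x]/(x^{n+1})$ instead of the paper's ampleness argument (which lets you prove part (2) before part (1)), and you rule out $x<0$ by comparing volumes of $M$ and $D$ rather than by invoking the global section of $\mathcal{O}_M(D)$ cutting out $D$.
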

\begin{proof}
By Lemma \ref{lemma-isomorphism}, $\mathbb{Z}x=H^2(M;\mathbb{Z})\cong\mathbb{Z}$. Since $M$ is K\"{a}hler, either $x>0$ or $x<0$ (and hence $M$ is projective due to the Kodaira embedding theorem). Since $\mathcal O_M(D)$ has a global section, the case $x< 0$ cannot occur.
This completes the proof of Part $(1)$.

For Part (2), it suffices to show $b_1(M) = 0$. Following \cite[p.232]{Fu80-2}  and arguing by contradiction, the image of the Albanese map $\alpha$ must be a curve; otherwise $M$ would carry a holomorphic $2-$form contradicting $b_2(M) = 1$ by Hodge decomposition. But $\alpha(M) $ cannot be a curve. In fact, the preimage of a generic point in the curve $\alpha(M)$ would be an effective divisor that cannot be ample, contradicting $H^2(M;\mathbb{Z})\cong\mathbb{Z}$.
\end{proof}

\section{A key proposition}\label{proof of Peternell results}
The $\chi_y$-genus $\chi_y(M)\in\mathbb{Z}[y]$ of a compact complex manifold $M$ was introduced by Hirzebruch (\cite{Hi66}) (see Section \ref{approcach to n=3 mod4} for more details). When expanding $\chi_y(M)$ at $y=-1$, it is well-known that the constant term $\chi_{-1}(M)$ is the Euler number $c_n[M]$, where $n=\dim M$. A direct calculation using the Hirzebruch-Riemann-Roch theorem shows that the coefficient in front of the quadratic term $(y+1)^2$ is exactly
\be\label{HRR}\frac{n(3n-5)}{24}c_n[M]+\frac{1}{12}c_1c_{n-1}[M],\ee
and hence the Chern number $c_1c_{n-1}[M]$ can be determined by the Hodge numbers of $M$ via an explicit formula (\cite[p.18]{NR}, \cite[Thm 3]{LW}, \cite[Cor.3.4]{Sa}). As a consequence, we have (\cite[Cor.2.5]{LW})
\begin{lemma}\label{LW}
If $M$ is an $n$-dimensional compact K\"{a}hler manifold with the same Betti numbers as $\mathbb{P}^n$, then
$$c_1c_{n-1}[M]=c_1c_{n-1}[\mathbb{P}^n]=\frac{1}{2}n(n+1)^2.$$
\end{lemma}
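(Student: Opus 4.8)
The plan is to use equation (\ref{HRR}), which expresses the $(y+1)^2$-coefficient of the $\chi_y$-genus in terms of $c_n[M]$ and $c_1c_{n-1}[M]$, as an \emph{engine} for recovering $c_1c_{n-1}[M]$. Since the $\chi_y$-genus and the Euler number $c_n[M]$ depend only on the Hodge numbers $h^{p,q}(M)$, the strategy reduces to showing that $M$ has exactly the Hodge numbers of $\mathbb{P}^n$, after which the two Chern numbers can be matched coefficient by coefficient and the value on $\mathbb{P}^n$ computed directly.

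First I would pin down the Hodge decomposition of $M$. As $M$ is compact K\"{a}hler, Hodge theory gives $b_{2k}(M)=\sum_{p+q=2k}h^{p,q}(M)$ and $b_{2k+1}(M)=\sum_{p+q=2k+1}h^{p,q}(M)$, with $h^{p,q}=h^{q,p}\geq0$. The hypothesis that $M$ shares the Betti numbers of $\mathbb{P}^n$ forces $b_{2k}(M)=1$ for $0\leq k\leq n$ and $b_{\mathrm{odd}}(M)=0$; the latter immediately kills every $h^{p,q}(M)$ with $p+q$ odd. The key observation for the even degrees is that the powers $\omega^k$ of a K\"{a}hler class $\omega$ are nonzero in $H^{2k}(M;\mathbb{R})$ for $0\leq k\leq n$, so $h^{k,k}(M)\geq1$; combined with $\sum_{p+q=2k}h^{p,q}(M)=1$ and non-negativity, this forces $h^{k,k}(M)=1$ and $h^{p,q}(M)=0$ whenever $p\neq q$. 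Thus $M$ has precisely the Hodge diamond of $\mathbb{P}^n$, and in particular $\chi_y(M)=\sum_{p,q}(-1)^q h^{p,q}(M)\,y^p=\chi_y(\mathbb{P}^n)$ as polynomials in $y$.

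With this identity in hand, I would expand both $\chi_y(M)$ and $\chi_y(\mathbb{P}^n)$ about $y=-1$ and compare coefficients. The constant terms give $c_n[M]=c_n[\mathbb{P}^n]=n+1$ (each being the Euler number $\sum_k(-1)^k b_k=n+1$), while the $(y+1)^2$-coefficients, read off from (\ref{HRR}), must also agree:
\[
\frac{n(3n-5)}{24}c_n[M]+\frac{1}{12}c_1c_{n-1}[M]
=\frac{n(3n-5)}{24}c_n[\mathbb{P}^n]+\frac{1}{12}c_1c_{n-1}[\mathbb{P}^n].
\]
Cancelling the now-equal terms $\tfrac{n(3n-5)}{24}c_n$ from both sides yields $c_1c_{n-1}[M]=c_1c_{n-1}[\mathbb{P}^n]$.

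It then remains only to evaluate the right-hand side. Writing $h\in H^2(\mathbb{P}^n;\mathbb{Z})$ for the hyperplane class with $h^n[\mathbb{P}^n]=1$, the Euler sequence gives $c(\mathbb{P}^n)=(1+h)^{n+1}$, so $c_1=(n+1)h$ and $c_{n-1}=\binom{n+1}{2}h^{n-1}=\tfrac{n(n+1)}{2}h^{n-1}$; hence $c_1c_{n-1}[\mathbb{P}^n]=(n+1)\cdot\tfrac{n(n+1)}{2}=\tfrac{1}{2}n(n+1)^2$, as claimed. The only genuinely non-formal step is the determination of the off-diagonal Hodge numbers in the second paragraph; the remainder is bookkeeping, once the Hirzebruch--Riemann--Roch computation behind (\ref{HRR}) is taken as given.
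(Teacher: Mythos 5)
Your proposal is correct and follows exactly the route the paper sketches around equation (\ref{HRR}) when it invokes \cite[Cor.2.5]{LW}: the $\chi_y$-genus depends only on Hodge numbers, a compact K\"ahler manifold with the Betti numbers of $\mathbb{P}^n$ has its Hodge diamond (since $\omega^k\neq 0$ forces $h^{k,k}\geq 1$), and then the $(y+1)^2$-coefficient identity recovers $c_1c_{n-1}[M]=c_1c_{n-1}[\mathbb{P}^n]=\tfrac{1}{2}n(n+1)^2$. You have simply filled in the details the paper delegates to the citation, so there is nothing to correct.
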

\begin{remark}
The formula (\ref{HRR}), implicitly or explicitly, has been obtained by several independent articles with different backgrounds (\cite{NR}, \cite{LW}, \cite{Sa}). This kind of formula is a special case of a general phenomenon, which was called \emph{$-1$-phenomenon} and investigated by the first-named author in \cite{Li15} and \cite{Li17}. The reader may refer to \cite[\S 3.2]{Li19} for a summary on these materials.
\end{remark}

The next proposition is a key point in this note, which reduces the first Chern classes of the pair $(M,D)$ in question to two possible cases.
\begin{proposition}\label{Peternell proposition}
Let the conditions and notation be as in Theorem \ref{Fujita theorem}. Then
$$\text{$\big(c_1(M),c_1(D)\big)=\big((n+1)x,nx_0\big)$ or $\bigg(\frac{1}{2}(n+1)x,\frac{1}{2}(n-1)x_0\bigg)$}.$$
\end{proposition}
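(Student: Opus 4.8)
The plan is to reduce everything to a single integer invariant of the pair and then pin it down with two Chern-number identities—one coming from $M$ and, crucially, one coming from $D$ itself.

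First I would write $c_1(M)=ax$ for some integer $a$, which is legitimate since $H^2(M;\mathbb{Z})=\mathbb{Z}x$ by Theorem \ref{Fujita theorem}. The adjunction formula $K_D=(K_M+\mathcal{O}_M(D))|_D$ together with $x|_D=x_0$ then forces $c_1(D)=(a-1)x_0$. Thus the proposition is exactly the assertion that $a\in\{n+1,\tfrac12(n+1)\}$. Along the way I record two orientation normalizations, $x^n[M]=1$ and $x_0^{\,n-1}[D]=1$: both hold because $x$ (resp.\ $x_0=i^{\ast}x$) is the ample positive generator of the top integral cohomology of $M$ (resp.\ of $D$), using that $M$ is projective (being Kähler with $x>0$) and hence so is $D$.

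Next I would extract two scalar equations. Since $H^{\ast}(M;\mathbb{Z})\cong H^{\ast}(\mathbb{P}^n;\mathbb{Z})$ by Theorem \ref{Fujita theorem}, Lemma \ref{LW} applies to $M$; writing $c_{n-1}(M)=\mu\, x^{n-1}$ it gives
$$a\mu=c_1c_{n-1}[M]=\tfrac12 n(n+1)^2.$$
The key observation is that $D$ is itself a compact Kähler manifold with the Betti numbers of $\mathbb{P}^{n-1}$ (as $H^{\ast}(D;\mathbb{Z})=\mathbb{Z}[x_0]/(x_0^n)$), so Lemma \ref{LW} applies to $D$ as well; writing $c_{n-2}(D)=\nu\, x_0^{\,n-2}$ this yields
$$(a-1)\nu=c_1(D)c_{n-2}(D)[D]=\tfrac12(n-1)n^2,$$
which in particular forces $a\neq 1$. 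To link $\mu$ and $\nu$ I use the normal-bundle sequence $0\to TD\to TM|_D\to N_{D/M}\to 0$ with $c_1(N_{D/M})=x_0$, i.e.\ $c(TM)|_D=c(TD)(1+x_0)$; comparing the degree-$2(n-1)$ terms gives $\mu=e(D)+\nu$, where $e(D)=c_{n-1}(D)[D]$ is the Euler number of $D$. Finally $e(D)=n$, since $D$ has the Betti numbers of $\mathbb{P}^{n-1}$, so $\mu=n+\nu$.

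Substituting $\nu=\tfrac{n^2(n-1)}{2(a-1)}$ into $a(n+\nu)=\tfrac12 n(n+1)^2$ and clearing denominators, the common factor $n$ cancels and everything collapses to the quadratic
$$2a^2-3(n+1)a+(n+1)^2=0,$$
whose discriminant is the perfect square $(n+1)^2$. Its roots are precisely $a=n+1$ and $a=\tfrac12(n+1)$, which translate into the two stated pairs $\big(c_1(M),c_1(D)\big)=\big((n+1)x,nx_0\big)$ and $\big(\tfrac12(n+1)x,\tfrac12(n-1)x_0\big)$.

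I expect the only delicate points to be bookkeeping rather than conceptual: verifying the orientation normalizations $x^n[M]=x_0^{\,n-1}[D]=1$, and checking that $D$ genuinely satisfies the hypotheses of Lemma \ref{LW}, namely that it is Kähler with the Betti numbers of $\mathbb{P}^{n-1}$. The one real idea is to run $D$ through the same $c_1c_{n-1}$-identity as $M$; once that second equation is in hand, the remainder is the elementary algebra above.
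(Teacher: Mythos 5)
Your proposal is correct and follows essentially the same route as the paper: the paper likewise applies Lemma \ref{LW} to both $M$ and $D$ (whose cohomology rings, by Theorem \ref{Fujita theorem}, give them the Betti numbers of $\mathbb{P}^n$ and $\mathbb{P}^{n-1}$), uses the normal bundle sequence to get $c_1(M)=c_1(D)+1$ and $c_{n-1}(M)=n+c_{n-2}(D)$, and solves the resulting equation — identical to your quadratic $2a^2-3(n+1)a+(n+1)^2=0$ — to obtain $c_1(M)=n+1$ or $\tfrac{1}{2}(n+1)$. Your explicit attention to the normalizations $x^n[M]=x_0^{\,n-1}[D]=1$ is handled in the paper implicitly by treating the Chern classes "as integers by abuse of notation."
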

\begin{proof}
The three holomorphic vector bundles $i^{\ast}(TM)=TM\big|_D$ (the restriction to $D$ of the tangent bundle $TM$), $TD$ and the normal bundle $ND$ of $D$ in $M$ are related by the short exact sequence
\be\label{split}0\longrightarrow TD\longrightarrow i^{\ast}(TM)\longrightarrow ND\longrightarrow0.\ee
By Theorem \ref{Fujita theorem} the Chern classes $c_i(M)\in\mathbb{Z}x^i$ and $c_i(D)\in\mathbb{Z}x_0^i$. So $c_i(M)$ and $c_i(D)$ can be viewed as integers by abuse of notation. With this understood and taking the first and $(n-1)$-th Chern classes on $i^{\ast}(TM)$ via (\ref{split}), we have by adjunction that
\be\label{1}c_1(M)=c_1(D)+1,\quad c_{n-1}(M)=c_{n-1}(D)+c_{n-2}(D)=n+c_{n-2}(D)\ee
as $c_{n-1}(D)$ is the Euler number of $D$. Theorem \ref{Fujita theorem} implies that the Betti numbers of $M$ and $D$ are the same as those of $\mathbb{P}^n$ and $\mathbb{P}^{n-1}$ respectively. Hence Lemma \ref{LW} yields
\be\label{2}c_1(M)c_{n-1}(M)=\frac{1}{2}n(n+1)^2,\quad c_1(D)c_{n-2}(D)=\frac{1}{2}(n-1)n^2.\ee
By (\ref{2}) both $c_1(M)\neq0$ and $c_1(D)\neq0$. Hence
putting (\ref{1}) and (\ref{2}) together we have
\be\label{3}\frac{n(n+1)^2}{2c_1(M)}=n+\frac{(n-1)n^2}{2(c_1(M)-1)}.\ee
Solving (\ref{3}) leads to $c_1(M)=n+1$ or $\frac{1}{2}(n+1)$.
\end{proof}

\begin{remark}
The latter case in Proposition \ref{Peternell proposition} can occur only if $n$ is odd. So when $n$ is even, $\big(c_1(M),c_1(D)\big)=\big((n+1)x,nx_0\big)$ and hence Conjecture \ref{Bn} is true due to the well-known criterion of Kobayashi-Ochiai (\cite[p.32, Cor.]{KO}).
\end{remark}

\section{Proof of main results}\label{proof of main results}
Our starting point to improve on Proposition \ref{Peternell proposition} is the following fact, which should be known to some experts as it is an application of
some quite classical results in algebraic topology. In lack of  a reference  we provide a
detailed proof.
\begin{proposition}\label{homotopy equivalent}
Let $M$ be a simply-connected smooth closed $2n$-dimensional manifold such that its cohomology ring $H^{\ast}(M;\mathbb{Z})\cong H^{\ast}(\mathbb{P}^n;\mathbb{Z})$.
Then $M$ is homotopy equivalent to $\mathbb{P}^n$.
\end{proposition}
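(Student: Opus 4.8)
The plan is to show that a map realizing the cohomology ring isomorphism $H^*(M;\mathbb{Z}) \cong H^*(\mathbb{P}^n;\mathbb{Z})$ can be upgraded to a homotopy equivalence, using the classical machinery of obstruction theory together with the fact that both spaces are simply connected with free homology. The starting point is to produce a continuous map $f\colon M \to \mathbb{P}^n$ inducing the desired isomorphism on $H^2$. Since $\mathbb{P}^n = K(\mathbb{Z},2)$ through dimension $2n+1$ is \emph{not} quite true, I would instead recall that $\mathbb{CP}^\infty = K(\mathbb{Z},2)$ and that a generator $x \in H^2(M;\mathbb{Z}) \cong [M, K(\mathbb{Z},2)]$ is represented by a classifying map $g\colon M \to \mathbb{CP}^\infty$. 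Because $\dim M = 2n$ and the cells of $\mathbb{CP}^\infty$ above dimension $2n$ are irrelevant for mapping in a $2n$-complex, $g$ can be homotoped into the $2n$-skeleton $\mathbb{P}^n \subset \mathbb{CP}^\infty$, yielding the sought map $f\colon M \to \mathbb{P}^n$ with $f^*(h) = x$, where $h$ generates $H^2(\mathbb{P}^n;\mathbb{Z})$.

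Next I would verify that this $f$ is a cohomology (equivalently homology) isomorphism in \emph{all} degrees. Since $f^*(h) = x$ is a ring homomorphism and the cohomology rings are truncated polynomial rings $\mathbb{Z}[h]/(h^{n+1})$ and $\mathbb{Z}[x]/(x^{n+1})$, we get $f^*(h^k) = x^k$ for all $0 \le k \le n$. The only subtlety is that $x^k$ must be a \emph{generator} of $H^{2k}(M;\mathbb{Z}) \cong \mathbb{Z}$, not merely a nonzero multiple; this follows from Poincar\'e duality, because $x^n$ must pair to $\pm 1$ with the fundamental class (the self-intersection forcing the top power to generate), and hence each intermediate power $x^k$ is a generator as well. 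Thus $f^*$ is an isomorphism on each $H^{2k}$, and the odd groups vanish on both sides, so $f$ induces an isomorphism on all cohomology with $\mathbb{Z}$ coefficients.

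The final step is to invoke the homology Whitehead theorem: a map between simply-connected CW complexes inducing an isomorphism on integral homology is a homotopy equivalence. Both $M$ and $\mathbb{P}^n$ are simply connected by hypothesis (and $\mathbb{P}^n$ visibly so), $f$ is an isomorphism on $H_*(\,\cdot\,;\mathbb{Z})$ by the preceding paragraph via the universal coefficient theorem (all groups are free, so cohomology and homology isomorphisms are equivalent), and therefore $f$ is a homotopy equivalence.

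The main obstacle, I expect, is the skeletal compression of the classifying map $g\colon M \to \mathbb{CP}^\infty$ down to $\mathbb{P}^n$ together with the orientation/generator bookkeeping in the second step. One must argue carefully that $f^*(h)$ equals a chosen generator $x$ and not a proper multiple, and that the resulting top-degree relation $f^*(h^n) = x^n$ is compatible with both fundamental classes up to sign; this is where Poincar\'e duality does the essential work, since it is precisely the nondegeneracy of the cup-product pairing that prevents $x$ from being a non-generating multiple of the positive generator. Everything else is a routine application of the Hurewicz and homology Whitehead theorems.
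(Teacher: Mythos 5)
Your proposal is correct and takes essentially the same route as the paper: represent a generator $x\in H^2(M;\mathbb{Z})$ by a map to $\mathbb{P}^{\infty}=K(\mathbb{Z},2)$, compress it into the $2n$-skeleton $\mathbb{P}^n$ by cellular approximation, check that the resulting map induces an isomorphism on integral (co)homology using the truncated-polynomial ring structure, and conclude with the homology Whitehead theorem for simply-connected spaces. The only (immaterial) difference is in the bookkeeping step: the paper passes from the cohomology isomorphism to a homology isomorphism via a degree-$\pm 1$ cap-product computation and Poincar\'{e} duality, while you use the universal coefficient theorem together with the freeness of the homology groups.
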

\begin{proof}
By a basic fact about the Eilenberg-MacLane space
$K(\mathbb{Z},2)=\mathbb{P}^{\infty}$ (\cite[Thm 4.57]{Ha}),
we have a bijection
$$H^2(M;\mathbb{Z})\longleftrightarrow[M,\mathbb{P}^{\infty}].$$  Then there exists a continuous map $$f:M\longrightarrow\mathbb{P}^{\infty},$$
 such that $f^{\ast}(u)=x$, where $H^{\ast}(\mathbb{P}^{\infty};\mathbb{Z})=\mathbb{Z}[u]$ and $H^{\ast}(M;\mathbb{Z})=\mathbb{Z}[x]/(x^{n+1})$.
 By the cellular approximation theorem (\cite[Thm 4.8]{Ha}), there exists another continuous map $g:~M\longrightarrow\mathbb{P}^{\infty}$ which is
 homotopic to $f$ such that $g(M)$ is contained in the $2n$-skeleton of $\mathbb{P}^{\infty}$ which is $ \mathbb{P}^{n}$. Since $g^{\ast}(u)=f^{\ast}(u)=x$, the map $g:~M\longrightarrow\mathbb{P}^n$
 induces an isomorphism on their cohomology rings:
\be\label{4}g^{\ast}:\quad H^{\ast}(\mathbb{P}^n;\mathbb{Z})=\mathbb{Z}[u]/(u^{n+1})
\overset{\cong}{\longrightarrow}H^{\ast}(M;\mathbb{Z})=
\mathbb{Z}[x]/(x^{n+1}).\ee
By (\ref{4}), $g^{\ast}(u^n)=x^n$ and therefore the degree of $g$ is $\pm1$. Choose suitable orientations $[M]$ and $[\mathbb{P}^n]$ on $M$ and $\mathbb{P}^n$. We have for each $0\leq k\leq n$,
$$g_{\ast}(x^{n-k}\cap[M])=g_{\ast}\big(g^{\ast}(u^{n-k})\cap[M]\big)
=u^{n-k}\cap g_{\ast}([M])=\pm u^{n-k}\cap[\mathbb{P}^n].$$
By Poincar\'{e} duality, this implies that $g$ induces isomorphisms on all integral homology groups:
\be\label{5}g_{\ast}:\quad H_{\ast}(\mathbb{P}^n;\mathbb{Z})
\overset{\cong}{\longrightarrow}H_{\ast}(M;\mathbb{Z}).\ee
Due to the simple-connectedness of $M$ and $\mathbb{P}^n$, and the fact (\ref{5}), the Whitehead theorem, \cite[Cor. 4.33]{Ha},
tells us that $g$ is a homotopy equivalence.
\end{proof}

Since a Fano manifold is simply-connected,
Proposition \ref{homotopy equivalent} has an immediate consequence.
\begin{corollary}\label{coro}
A Fano manifold whose integral cohomology ring is the same as that of $\mathbb{P}^n$ must be homotopy equivalent to $\mathbb{P}^n$.
\end{corollary}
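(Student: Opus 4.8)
The plan is to obtain Corollary \ref{coro} as an immediate application of Proposition \ref{homotopy equivalent}, since the hypotheses of that proposition are almost built into the statement. A Fano manifold $M$ of complex dimension $n$ is in particular a compact complex manifold, hence underlies a smooth closed oriented real manifold of dimension $2n$; and the ring condition in the proposition is precisely the assumption $H^{\ast}(M;\mathbb{Z})\cong H^{\ast}(\mathbb{P}^n;\mathbb{Z})$. Thus the only hypothesis that still needs to be verified is that $M$ is simply-connected.

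Accordingly, the one genuine input I would invoke is the classical fact that every Fano manifold is simply-connected. This follows because a Fano manifold is rationally connected (by the work of Campana and of Koll\'ar--Miyaoka--Mori), and a smooth rationally connected projective variety has trivial fundamental group. First I would record $\pi_1(M)=1$ from this; alternatively one can appeal to the fact, already flagged in the text preceding the corollary, that Fano manifolds are simply-connected.

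With $M$ now confirmed to be a simply-connected smooth closed $2n$-manifold whose integral cohomology ring agrees with that of $\mathbb{P}^n$, Proposition \ref{homotopy equivalent} applies verbatim and produces a map $g\colon M\to\mathbb{P}^n$ that is a homotopy equivalence, which is the desired conclusion. There is essentially no obstacle at the level of the corollary itself: all the real content sits inside Proposition \ref{homotopy equivalent}, whose argument factors the classifying map of $x$ through the $2n$-skeleton $\mathbb{P}^n\subset\mathbb{P}^\infty$ via cellular approximation, uses $g^{\ast}(u^n)=x^n$ to see that $g$ has degree $\pm 1$, and then combines Poincar\'e duality with the Whitehead theorem. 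The only point one must be slightly careful to state explicitly is the simple-connectedness, since that is exactly the bridge allowing the purely cohomological hypothesis to be upgraded to a homotopy equivalence.
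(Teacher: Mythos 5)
Your proposal is correct and is essentially identical to the paper's argument: the paper likewise deduces the corollary immediately from Proposition \ref{homotopy equivalent}, citing only the standard fact that a Fano manifold is simply-connected. Your extra justification of that fact via rational connectedness (Campana, Koll\'ar--Miyaoka--Mori) is sound, and the paper simply asserts it without proof.
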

\begin{remark}
This implies that in Conjecture \ref{Cn}, the manifold in question is indeed homotopy equivalent to $\mathbb{P}^n$.
Libgober and Wood showed that a compact K\"{a}hler manifold homotopy equivalent to $\mathbb{P}^6$ is biholomorphic to
$\mathbb{P}^6$ (\cite[Thm 1]{LW}). So Conjecture \ref{Cn} is true when $n\leq 6$ (see also \cite[\S7]{De}).
\end{remark}

Now are are ready to prove the main results of this note.
\subsection*{Proof of Theorem \ref{firstmainresult}}
\begin{proof}
By Proposition \ref{Peternell proposition}, the Fano index $r$ of the Fano manifold $M$ is either $r = n+1$ or $ r = \frac{1}{2}(n+1)$.
If $r = n+1$, then we are done by the Kobayashi-Ochiai theorem (\cite[p.32, Cor.]{KO}).
Assume  that $ r = \frac{1}{2}(n+1)$. By Theorem \ref{Fujita theorem} and Corollary \ref{coro}, $M$ has the same homotopy type as $\mathbb{P}^n$.
The classical Wu formula (\cite[p.130]{MS}) says that Stiefel-Whitney classes are homotopy type invariants. Thus
\be\label{9}\frac{1}{2}(n+1)\equiv n+1 \pmod{2}\ee
as the first Chern class modulo two is the second Stiefel-Whitney class. We obtain from (\ref{9}) that $n\equiv3\pmod{4}$.
\end{proof}
\begin{remark}
The fact that the first Chern class modulo two is a homotopy type invariant was used throughout the arguments in \cite{LW}.
\end{remark}

\subsection*{Proof of Theorem \ref{secondmainresult}}
\begin{proof}
The general form of the Poincar\'{e}-Alexander-Lefschetz duality theorem (\cite[p.351]{Br}) says that, for compact subsets $B\subset A$ in $M$, we have
\be\label{duality}H^{2n-k}(M-B,M-A;\mathbb{Z})\cong
H_{k}(A,B;\mathbb{Z}),\quad 0\leq k\leq 2n.\ee
Taking $(A,B)=(M,D)$ in (\ref{duality}) yields
$$H_k(M,D;\mathbb{Z})\cong H^{2n-k}(M-D;\mathbb{Z})=0,\quad0\leq k\leq 2n-1,$$
as by the assumption in Theorem \ref{secondmainresult} the open manifold $U=M-D$ is homology trivial. This, via the homology long exact sequence for the pair $(M,D)$
$$\cdots\longrightarrow H_{k+1}(M,D;\mathbb{Z})\longrightarrow H_{k}(D;\mathbb{Z})\overset{i_{\ast}}{\longrightarrow}
H_{k}(M;\mathbb{Z})\longrightarrow H_k(M,D;\mathbb{Z})\longrightarrow\cdots,$$
yields that the the natural homomorphism $H_k(D;\mathbb{Z})\overset{i_{\ast}}{\longrightarrow}H_k(M;\mathbb{Z})$ is bijective for
$0\leq k\leq 2(n-1)$. Then Theorem \ref{secondmainresult} follows from Theorem \ref{firstmainresult}.
\end{proof}

\subsection*{Proof of Theorem \ref{application-fibration}}
\begin{proof}
The conclusion $\dim M\geq2\dim S$ was proved in \cite[p.64, Prop.5]{So}. Let $D_x:=f^{-1}(x)$ and $M_x:=F^{-1}(x)$ be the
respective fibers at $x\in S$. When $\dim M=2\dim S$ or $2\dim S+1$,  the natural homomorphism
$$H_k(D_x;\mathbb{Z})\longrightarrow H_k(M_x;\mathbb{Z})$$
is bijective for $0\leq k\leq 2\dim D_x$ (see the last two lines in \cite[p.64]{So}). Applying Theorem \ref{firstmainresult} to the pair $(M_x,D_x)$ yields the desired proof.
\end{proof}

\section{Approaches to the case $\dim M \equiv 3 \pmod 4$ }\label{approcach to n=3 mod4}
In this section, we present some possible approaches to the missing case $\dim M \equiv 3 \pmod 4$. After discussing the case when  $\mathcal O_M(1)$ has at least two independent sections, we set up a system of equations which conjecturally
lead to solving the remaining open case.

First, we have the following observation.
\begin{proposition}\label{h^0>1} Let $(M,D) $ be as in Theorem \ref{firstmainresult}.
 Let $\mathcal O_M(1) $ be the ample generator of ${\rm Pic}(M) \cong \mathbb Z$. If $n\equiv3\pmod 4$ and if $h^0\big(M,\mathcal O_M(1)\big) \geq 2$, then $M \cong \mathbb P^n$ and $D$ is a hyperplane.
\end{proposition}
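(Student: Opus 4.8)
The plan is to show that the hypothesis $h^0\bigl(M,\mathcal O_M(1)\bigr)\ge 2$ collapses the ``non-standard'' alternative of Theorem~\ref{firstmainresult}(2) onto the standard one. By Proposition~\ref{Peternell proposition} either $c_1(M)=(n+1)x$, in which case the index is $n+1$ and $M\cong\mathbb P^n$ by Kobayashi--Ochiai (\cite{KO}) and we are done, or $c_1(M)=\tfrac12(n+1)x$, so that $M$ is Fano of Picard number one and index $\tfrac12(n+1)$ with $D\in|\mathcal O_M(1)|$. I work in the latter case and set $L:=\mathcal O_M(1)$. The decisive numerical input is that, by Theorem~\ref{Fujita theorem}, $H^{\ast}(M;\mathbb Z)=\mathbb Z[x]/(x^{n+1})$ with $x=c_1(L)$; hence $x^n$ is the positive generator of $H^{2n}(M;\mathbb Z)\cong\mathbb Z$, and since $L$ is ample this gives the key identity $L^n=x^n[M]=1$.

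The endgame is clean once base-point-freeness is available. If $|L|$ is base-point free it defines a morphism $\varphi\colon M\to\mathbb P^N$ with $N=h^0(M,L)-1\ge 1$ and $\varphi^{\ast}\mathcal O(1)=L$. As $L$ is ample, $\varphi$ is finite onto its image, so $\dim\varphi(M)=n$ and $N\ge n$; and $L^n=\deg(\varphi)\cdot\deg\varphi(M)=1$ forces both factors to equal $1$. Thus $\varphi(M)\subset\mathbb P^N$ is a variety of degree one, hence a linearly embedded $\mathbb P^n$, and $\varphi$ is birational and finite onto a normal variety, hence an isomorphism. Therefore $M\cong\mathbb P^n$ and $D$, a member of $|\mathcal O_{\mathbb P^n}(1)|$, is a hyperplane; in particular the index is $n+1$, contradicting $\tfrac12(n+1)<n+1$, so the non-standard case cannot survive the hypothesis $h^0\ge 2$.

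Everything therefore reduces to proving that $|L|$ is base-point free. First I would note that $|L|$ has no fixed component: $D$ is irreducible and reduced, so a fixed component would have to be $D$ itself, forcing $h^0(L)=1$ against the hypothesis; hence the base locus $B$ has codimension $\ge 2$ and lies in $D$. Using the exact sequence $0\to\mathcal O_M\to L\to\mathcal O_D(1)\to 0$ together with $H^1(M,\mathcal O_M)=0$ (as $M$ is Fano), the restriction $H^0(M,L)\twoheadrightarrow H^0(D,\mathcal O_D(1))$ is surjective, which identifies $B$ with the base locus of the complete system $|\mathcal O_D(1)|$ on $D$. By Lemma~\ref{lemma-isomorphism} and Corollary~\ref{coro}, $D$ is again a homotopy projective space with $(\mathcal O_D(1))^{n-1}=1$, so one is in position to descend; I would try to run this descent, applying Bertini to the smooth members at each stage, down to the one-dimensional stratum, where a homotopy $\mathbb P^1$ is genuinely $\mathbb P^1$ and the induced polarization is $\mathcal O(1)$, which is base-point free.

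The hard part will be base-point-freeness of the \emph{generator} $L$ itself. Fujita-type freeness results only yield base-point-freeness of $K_M+(n+1)L=\tfrac12(n+1)L$ and its positive multiples, never of $L$, so they give no traction; and an ample line bundle with top self-intersection one need not be spanned in general. This is exactly the step at which the homology-cell hypothesis — equivalently the contractibility and simple-connectivity of the affine variety $M\setminus D$ (the complement of the ample divisor $D$), together with the precise ring structure $\mathbb Z[x]/(x^{n+1})$ — must be fed in to control, and ultimately to empty, the codimension-two base locus $B$. I regard establishing $B=\varnothing$ as the crux of the argument: indeed, dropping the assumption $h^0(L)\ge 2$ would reduce the statement to Conjecture~\ref{Cn}, which is open for $n\ge 7$, so the extra sections must be doing essential work, and any proof has to exploit them precisely in forcing base-point-freeness.
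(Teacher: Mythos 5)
Your endgame is fine as far as it goes: the no-fixed-component observation, the surjectivity of $H^0\big(M,L\big)\to H^0\big(D,\mathcal O_D(1)\big)$ via vanishing, and the fact that a finite morphism defined by a spanned ample $L$ with $L^n=1$ maps $M$ isomorphically onto a linearly embedded $\mathbb P^n$. But the entire proposal hinges on base-point freeness of $L$, which you do not prove and explicitly flag as the crux. That is a genuine gap, not a technicality: as you yourself observe, an ample generator with top self-intersection one need not be spanned (degree-one del Pezzo surfaces already fail this), so spannedness cannot come from the numerics, and the proposal gives no mechanism for extracting it from the homology-cell hypothesis. As written, the argument never gets past the alternative already provided by Theorem \ref{firstmainresult}(2).

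The paper uses the extra sections in a much weaker --- and sufficient --- way, and this is the idea you are missing: you do not need the full system $|L|$ to be free, only \emph{one} member $\widetilde D\in|\mathcal O_M(1)|$ other than $D$. Picking a generic such $\widetilde D$, the cycle $D\cdot\widetilde D$ has class one in $H^4(M;\mathbb Q)$, so by \cite[Prop.7.2]{Fu84} the intersection $D\cap\widetilde D$ is smooth; a Mayer--Vietoris argument then shows that $H_k(D\cap\widetilde D;\mathbb Z)\to H_k(D;\mathbb Z)$ is bijective for $0\le k\le 2(n-2)$, i.e., $\big(D,\,D\cap\widetilde D\big)$ is again a pair satisfying the hypotheses of Theorem \ref{firstmainresult}, now in dimension $n-1\equiv 2\pmod 4$. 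The already-established even-dimensional case of that theorem gives $D\cong\mathbb P^{n-1}$, hence $c_1(D)=nx_0$, which is incompatible with the non-standard value $c_1(D)=\tfrac12(n-1)x_0$ from Proposition \ref{Peternell proposition} (equivalently, adjunction forces $c_1(M)=(n+1)x$ and Kobayashi--Ochiai applies); therefore $M\cong\mathbb P^n$ and $D$ is a hyperplane. In short, the hypothesis $h^0\ge 2$ is exploited not to span $L$ but to perform a dimension descent into the case of the theorem that is already known.
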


\begin{proof}
By assumption we may pick a generic $\widetilde{D} \in \vert  \mathcal O_M(1) \vert $ such that $\widetilde{D}$ is smooth and different from $D$. Since
 $D \cdot\widetilde{D}$ has class one in $H^4(M,\mathbb Q)$, it follows from \cite[Prop.7.2]{Fu84} that $D \cap \widetilde{D}$ is smooth. Further, by the Mayer-Vietoris, the inclusions
 $$H_k(D \cap \widetilde{D};\mathbb Z) \to H_k(D;\mathbb Z)$$
 are bijective for $0 \leq k \leq 2(n-2)$.
 Since $\dim D$ is even, the conclusion follows from Theorem \ref{firstmainresult}.
 \end{proof}

We extract the assumption in Proposition \ref{h^0>1} as follows. Recall that the restriction map
 $$ H^0\big(M,\mathcal O_M(1)\big) \to H^0\big(D,\mathcal O_D(1)\big) $$
 is surjective due to the Kodaira vanishing theorem. Thus we are reduced to the following

\begin{question} {\rm Let $D$ be a Fano manifold of dimension $n=4k+2$ and Fano index $r = \frac{n}{2}$. Assume that ${\rm Pic}(D) \cong \mathbb Z$, $\mathcal O_D(1)$ the ample generator of ${\rm Pic}(D)$, $c_1(\mathcal O_D(1))^n  = 1$. Is
 $$ \dim  H^0\big(D,\mathcal O_D(1)\big) \ne 0?$$ }
 \end{question}
Of course, we may assume much more in our setting: its cohomology ring is the same as $\mathbb P^n$.

We next point out a possible numerical approach to solving the problem. We first set up some notation expanding the discussion at the
 beginning of Section \ref{proof of Peternell results}.

\begin{definition}\label{definition}
The $\chi_y$-genus of an $n$-dimensional compact complex manifold $X$, $\chi_y(M)$, is defined in terms of its Hodge numbers $h^{p,q}(M)$ by
 $$ \chi_y(M) := \sum_{p=0}^n \chi_p(M) y^p, $$
 where
 $$ \chi_p(M) = \sum_{q=0}^n (-1)^q h^{p,q}(M).$$
 Set further
 $$ A_k(M) = \frac{1}{(2k)!} \chi_y^{(2k)}(M)\big|_{y=-1},\quad 0\leq k\leq \lfloor\frac{n+2}{2}\rfloor.$$
 Namely, $A_k(M)$ is the coefficient in front of the term $(y+1)^{2k}$ when expanding $\chi_y(M)$ at $y=-1$.
 \end{definition}

\begin{remark}
Via the Hirzebruch-Riemann-Roch theorem,  the numbers $A_k(M)$ are determined by linear combinations of Chern numbers of $M$, and the first few ones for general $n$ have explicit formulas. Further, there is a recursive algorithm to determine them (see \cite[\S3.2]{Li19} and the references therein for a detailed summary on these facts).
\end{remark}

Note that $A_0(M) = c_n[M] $ and that
 $A_1(M) $ is a linear combination of $c_n[M]$ and $c_1c_{n-1}[M]$ \big(recall (\ref{HRR})\big).
 Furthermore, in addition to the two Chern numbers $c_n$ and $c_1c_{n-1}$, the new term arising from $A_2(M)$ is (see \cite[\S3.2]{Li19})
 $$ \big(c_1^2+3c_2\big)c_{n-2} - \big(c_1^3-3c_1c_2+3c_3\big) c_{n-3}.$$

The main point is now

 \begin{proposition} Let $M$ and $N$ be $n$-dimensional compact complex manifolds. If $M$ and $N$ have the same
 Hodge numbers, then
 $$ A_k(M) = A_k(N),\quad 0\leq k\leq \lfloor\frac{n+2}{2}\rfloor,$$
 \end{proposition}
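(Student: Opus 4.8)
The plan is to prove the proposition directly from the definitions, by observing that the assignment $M\mapsto A_k(M)$ factors through the single polynomial $\chi_y(M)\in\mathbb Z[y]$, and that this polynomial is manifestly a function of the Hodge numbers. This is the complementary description to the one recorded in the preceding remark: whereas Hirzebruch--Riemann--Roch re-expresses each $A_k(M)$ as a linear combination of Chern numbers, the Definition writes $A_k(M)$ in purely Hodge-theoretic terms, and it is this latter presentation that makes the invariance transparent.

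First I would unwind the definition of $\chi_y$. For each fixed $p$, the integer $\chi_p(M)=\sum_{q=0}^n(-1)^q h^{p,q}(M)$ is a $\mathbb Z$-linear combination of the Hodge numbers of $M$, so it depends only on the collection $\{h^{p,q}(M)\}$. Consequently the polynomial $\chi_y(M)=\sum_{p=0}^n \chi_p(M)\,y^p$ of degree at most $n$ is determined entirely by the Hodge numbers. Hence, if $M$ and $N$ have the same Hodge numbers, then $\chi_p(M)=\chi_p(N)$ for every $0\leq p\leq n$, and therefore $\chi_y(M)=\chi_y(N)$ as elements of $\mathbb Z[y]$.

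Next I would observe that $A_k(M)=\frac{1}{(2k)!}\chi_y^{(2k)}(M)\big|_{y=-1}$ is, by its very definition, a function of the polynomial $\chi_y(M)$ alone: differentiating a polynomial $2k$ times and evaluating at $y=-1$ is an operation that reads off only the coefficients of that polynomial. Applying this operation to the equality $\chi_y(M)=\chi_y(N)$ already established then yields $A_k(M)=A_k(N)$. This holds for every $k\geq 0$, and in particular for all $k$ in the stated range $0\leq k\leq\lfloor\frac{n+2}{2}\rfloor$; the restriction to even derivatives and to this range is merely the bookkeeping convention fixed in the Definition, and plays no role in the argument.

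I do not expect any genuine analytic or geometric obstacle here: the entire content of the statement is formal, resting on the single structural fact that $\chi_y$ is computed from Hodge numbers. The only hypothesis actually used is that $M$ and $N$ are compact complex manifolds of the same dimension $n$, so that the Hodge numbers are defined and the sums defining $\chi_p$ and $\chi_y$ are finite of the correct degree. The substantive payoff lies not in this proposition but in combining it with the Hirzebruch--Riemann--Roch description from the remark above: together they show that the Chern-number combinations appearing in $A_0,A_1,A_2,\dots$ (the Euler number $c_n$, the pair $(c_n,\,c_1c_{n-1})$, and then the term $(c_1^2+3c_2)c_{n-2}-(c_1^3-3c_1c_2+3c_3)c_{n-3}$, and so on) are themselves Hodge-number invariants, which is precisely what makes these quantities usable in the numerical approach to the case $\dim M\equiv 3\pmod 4$.
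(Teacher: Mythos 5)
Your proof is correct and is essentially the paper's own argument, just spelled out in detail: the paper disposes of this proposition in one sentence ("the numbers $A_k$ only depend on the Hodge numbers"), and your unwinding — equal Hodge numbers give equal polynomials $\chi_y$, and each $A_k$ is read off from that polynomial by differentiation and evaluation at $y=-1$ — is exactly the content behind that sentence.
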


This is simply due to the fact, that the numbers $A_k$ only depend on the Hodge numbers.
We conclude

 \begin{proposition}  Under the assumptions of Theorem \ref{firstmainresult}, the following equations hold.
 \begin{equation} A_k(M) = A_k(\mathbb P^n),\quad 0\leq k\leq \lfloor\frac{n+2}{2}\rfloor\nonumber  \end{equation}
 and
  \begin{equation} A_k(D) = A_k(\mathbb P^{n-1}),\quad \quad 0\leq k\leq \lfloor\frac{n+1}{2}\rfloor. \nonumber \end{equation}
  \end{proposition}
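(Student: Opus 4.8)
The plan is to reduce the claim to the Proposition immediately above, which asserts that two $n$-dimensional compact complex manifolds with identical Hodge numbers share the same invariants $A_k$. Thus it suffices to verify that $M$ has the same Hodge numbers as $\mathbb{P}^n$ and that $D$ has the same Hodge numbers as $\mathbb{P}^{n-1}$; the equalities $A_k(M) = A_k(\mathbb{P}^n)$ and $A_k(D) = A_k(\mathbb{P}^{n-1})$ then follow at once by applying that Proposition to the pairs $(M,\mathbb{P}^n)$ and $(D,\mathbb{P}^{n-1})$.

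First I would record the Betti numbers. By Theorem \ref{Fujita theorem} we have $H^{\ast}(M;\mathbb{Z}) = \mathbb{Z}[x]/(x^{n+1})$ and $H^{\ast}(D;\mathbb{Z}) = \mathbb{Z}[x_0]/(x_0^n)$, so that $b_{2i}(M) = 1$ for $0 \leq i \leq n$, all odd Betti numbers of $M$ vanish, and analogously $b_{2i}(D) = 1$ for $0 \leq i \leq n-1$ with all odd Betti numbers of $D$ vanishing. These coincide with the Betti numbers of $\mathbb{P}^n$ and $\mathbb{P}^{n-1}$ respectively.

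Next I would upgrade this from Betti numbers to Hodge numbers using the K\"ahler condition. Both $M$ and $D$ are projective, hence K\"ahler, so the Hodge decomposition gives $b_k = \sum_{p+q=k} h^{p,q}$ with each $h^{p,q} \geq 0$. The $i$-th power of a K\"ahler class is a nonzero element of $H^{i,i}$, whence $h^{i,i} \geq 1$ for $0 \leq i \leq \dim$. Combined with $b_{2i} = 1$ and nonnegativity, this forces $h^{i,i} = 1$ and $h^{p,q} = 0$ for $p+q = 2i$, $p \neq q$; moreover $b_{2i+1} = 0$ forces every Hodge number in odd total degree to vanish. Therefore the Hodge numbers of $M$ are exactly those of $\mathbb{P}^n$, and the identical reasoning applied to $D$ yields those of $\mathbb{P}^{n-1}$.

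Finally, invoking the preceding Proposition completes the argument. I expect no substantial obstacle here: the only point requiring care is the passage from Betti numbers to Hodge numbers, handled cleanly by the positivity of the powers of the K\"ahler class together with the nonnegativity of the Hodge numbers. One should merely note that the ranges of $k$ are the natural ones for manifolds of dimension $n$ and $n-1$, namely $k \leq \lfloor \frac{n+2}{2} \rfloor$ and $k \leq \lfloor \frac{n+1}{2} \rfloor = \lfloor \frac{(n-1)+2}{2} \rfloor$, matching the hypotheses of the preceding Proposition applied in dimensions $n$ and $n-1$.
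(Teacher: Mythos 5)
Your proposal is correct and follows essentially the same route as the paper: the paper derives this proposition directly from the preceding one (that the $A_k$ depend only on Hodge numbers), leaving implicit exactly the step you spell out, namely that the cohomology rings from Theorem \ref{Fujita theorem} give the Betti numbers of $\mathbb{P}^n$ and $\mathbb{P}^{n-1}$, and the K\"ahler condition (nonvanishing of powers of the K\"ahler class in $H^{i,i}$ plus nonnegativity of Hodge numbers) upgrades these to equalities of Hodge numbers. Your verification of the ranges of $k$ in dimensions $n$ and $n-1$ is also correct.
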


 Thus we obtain a system of equations
 \begin{eqnarray}\label{EQ}
\left\{\begin{array}{ll}
A_k(M) = A_k(\mathbb P^n),& 0\leq k\leq \lfloor\frac{n+2}{2}\rfloor,\\
~\\
A_k(D) = A_k(\mathbb P^{n-1}),& 0\leq k\leq \lfloor\frac{n+1}{2}\rfloor,\\
~\\
c_i(M) = c_i(D) + c_{i-1}(D), & \ 1 \leq i \leq n-1.
\end{array}
\right.
\end{eqnarray}

The hope - in a strong version -  is now that this system of equations has only one integer solution, namely $$c_i(M) = c_i(\mathbb P^n) = \binom{n+1}{i}$$ for all $i$
  (and thus $c_i(D) = c_i(\mathbb P^{n-1})$).

  It would be however be sufficient to prove a weak version, namely that there is no integer solution with $c_1(M) = \frac{n+1}{2}$.

Recall that the the system of equations (\ref{EQ}) for $k = 0$ and $k = 1$ simply gives
$$ c_n(M) = c_n(\mathbb P^n) = n+1;$$
$$ c_{n-1}(D) = c_{n-1}(\mathbb P^{n-1}) = n; $$
$$ c_1(M) \cdot c_{n-1}(M) = c_1(\mathbb P^n) \cdot  c_{n-1}(\mathbb P^n) = \frac{1}{2} n (n+1)^2;$$
$$ c_1(D) \cdot c_{n-2}(D) = c_1(\mathbb P^{n-1}) \cdot  c_{n-2}(\mathbb P^{n-1}) = \frac{1}{2} (n-1) n^2.$$
For $n = 5$, we get two more equations
 \be\begin{split}
 &\big(c_1^2(M) + 3c_2(M)\big) \cdot c_{3}(M) - \big(c_1^3(M) - 3c_1(M) \cdot c_2(M) + 3c_3(M)\big) \cdot c_{2}(M)\\
   =&  \big(c_1^2(\mathbb P^5) + 3c_2(\mathbb P^5)\big) \cdot c_{3}(\mathbb P^5) - \big(c_1^3(\mathbb P^5) - 3c_1(\mathbb P^5) \cdot c_2(\mathbb P^5) + 3c_3(\mathbb P^5)\big) \cdot c_{2}(\mathbb P^5)\end{split}\nonumber\ee
   and
 \be\begin{split}
 &\big(c_1^2(D) + 3c_2(D)\big) \cdot c_{2}(D) - \big(c_1^3(D) - 3c_1(D) \cdot c_2(D) + 3c_3(D)\big) \cdot c_{1}(D)\\
  =&  \big(c_1^2(\mathbb P^4) + 3c_2(\mathbb P^4)\big) \cdot c_{2}(\mathbb P^4) - \big(c_1^3(\mathbb P^4) - 3c_1(\mathbb P^4) \cdot c_2(\mathbb P^4) + 3c_3(\mathbb P^4)\big) \cdot c_{2}(\mathbb P^4).
  \end{split}\nonumber\ee
  These equations in combination with the the third one in (\ref{EQ}) do not have an integer solution in case $c_1(M) = 3$.

Here are some further information on the Chern classes of $M$.

\begin{proposition} $\sum_{k=0}^{n} (-1)^k c_k(M) = (-1)^n.$
 \end{proposition}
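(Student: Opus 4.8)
The plan is to encode the Chern classes of $M$ and $D$ into two honest polynomials and then to play off the adjunction relations against the Euler characteristics. Using the abuse of notation of Proposition \ref{Peternell proposition} (so that $c_k(M),c_k(D)$ are integers), I would introduce the total Chern classes as the polynomials
$$ C_M(t) := \sum_{k=0}^{n} c_k(M)\, t^k, \qquad C_D(t) := \sum_{j=0}^{n-1} c_j(D)\, t^j. $$
These are genuine polynomials of degrees $n$ and $n-1$ respectively, since $H^{2k}(M;\mathbb{Z})=0$ for $k>n$ and $H^{2k}(D;\mathbb{Z})=0$ for $k>n-1$ by Theorem \ref{Fujita theorem}. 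With this notation the asserted identity $\sum_{k=0}^n (-1)^k c_k(M) = (-1)^n$ is exactly the statement $C_M(-1) = (-1)^n$.

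First I would record the adjunction relations furnished by the normal bundle sequence (\ref{split}), namely $c_k(M) = c_k(D) + c_{k-1}(D)$ for $1 \leq k \leq n-1$, together with $c_0(M)=c_0(D)=1$. In terms of the polynomials these relations say precisely that $C_M(t)$ and $(1+t)\,C_D(t)$ share all coefficients in degrees $\leq n-1$, so their difference is concentrated in the top degree:
$$ C_M(t) - (1+t)\, C_D(t) = \alpha\, t^n $$
for a single integer $\alpha$. Next I would pin down $\alpha$ by comparing the coefficients of $t^n$. The coefficient of $t^n$ in $C_M$ is the integer $c_n(M)$, which is the Euler number $c_n(M)[M]=\chi(M)=n+1$ because $M$ has the Betti numbers of $\mathbb{P}^n$; the coefficient of $t^n$ in $(1+t)\,C_D(t)$ is the leading coefficient $c_{n-1}(D)$, which equals $\chi(D)=n$ since $D$ has the Betti numbers of $\mathbb{P}^{n-1}$. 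Hence $\alpha = (n+1)-n = 1$, so
$$ C_M(t) = (1+t)\, C_D(t) + t^n. $$
Evaluating at $t=-1$ annihilates the factor $(1+t)$ and leaves $C_M(-1) = (-1)^n$, which is the claim.

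I do not anticipate a serious obstacle: the argument is uniform across the two cases of Proposition \ref{Peternell proposition}, because it invokes only the adjunction relations and the topological Euler characteristics $\chi(M)=n+1$, $\chi(D)=n$, which agree with those of $(\mathbb{P}^n,\mathbb{P}^{n-1})$ in either case. The one point that deserves care is the top-degree bookkeeping: the adjunction identity $c_k(M)=c_k(D)+c_{k-1}(D)$ is valid only for $k\leq n-1$ (the restriction map $H^{\ast}(M;\mathbb{Z})\to H^{\ast}(D;\mathbb{Z})$ is an isomorphism only below the top degree), and the breakdown of this relation in degree $n$ is measured exactly by $\chi(M)-\chi(D)=1$. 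This discrepancy is what produces the correction term $t^n$ and, upon substituting $t=-1$, the sign $(-1)^n$.
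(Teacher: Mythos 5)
Your proof is correct, and it takes a genuinely different route from the paper's. The paper argues on $M$ itself with log machinery: by Mayer--Vietoris, $\chi(M\setminus D)=\chi(M)-\chi(D)=1$; Iitaka's (Norimatsu's) log version of Hopf's theorem then gives $\chi(M\setminus D)=(-1)^n c_n\big(\Omega^1_M(\log D)\big)[M]$, so $c_n\big(\Omega^1_M(\log D)\big)=(-1)^n$; finally the residue sequence $0\to\Omega^1_M\to\Omega^1_M(\log D)\to i_*\mathcal{O}_D\to 0$, together with $c_k(i_*\mathcal{O}_D)=1$ (integer notation), expands this top log Chern class as $\sum_{k=0}^n(-1)^k c_k(M)$. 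You instead work on $D$: you package the adjunction relations $c_k(M)=c_k(D)+c_{k-1}(D)$ ($1\le k\le n-1$) coming from the normal bundle sequence (\ref{split}) into the polynomial identity $C_M(t)-(1+t)C_D(t)=\alpha t^n$, and you pin down $\alpha=1$ using $c_n(M)=\chi(M)=n+1$ and $c_{n-1}(D)=\chi(D)=n$ --- your careful remark that the adjunction relation fails exactly in top degree, with defect $\chi(M)-\chi(D)=1$, is precisely the right bookkeeping, and evaluation at $t=-1$ finishes. Both proofs ultimately rest on the same numerical input $\chi(M)-\chi(D)=1$, but yours is more elementary and self-contained: it uses only the Whitney formula and the standard fact that the top Chern number is the Euler number, all of which are already established in the paper (Theorem \ref{Fujita theorem}, equation (\ref{1}), and the system (\ref{EQ})), whereas the paper imports an external theorem on log cotangent bundles. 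In exchange, the paper's route produces the intermediate statement $c_n\big(\Omega^1_M(\log D)\big)=(-1)^n$, which is of independent interest for the pair $(M,D)$; your route yields the stronger polynomial identity $C_M(t)=(1+t)C_D(t)+t^n$, of which the proposition is the specialization $t=-1$.
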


 \begin{proof}
 Observe first that $\chi(M \setminus D) = 1$ by the Mayer-Vietoris sequence.
 By Iitaka's
 log version of Hopf's theorem (see \cite[p.7, Prop. 2]{Ii} or \cite[Thm 3]{Nor}),
 $$ \chi(M \setminus D)  = (-1)^n c_n(\Omega_M(\log D)), $$
 we conclude that
 $ c_n(\Omega^1_M(\log D)) = {(-1)}^n.$
 Since $c_k(\mathcal O_D) = 1$ (here we identify $\mathcal O_D $ and $i_*(\mathcal O_D)$, where
 $i: D \to M$ is the inclusion), the formula follows.

 \end{proof}

\section*{Acknowledgements}
The first author thanks Yang Su for communicating some arguments in Proposition \ref{homotopy equivalent} several years ago. The second author thanks Daniel Barlet, Fr\'ed\'eric Campana, Baohua Fu, Andreas H\"oring, Mihai P\v{a}un
and Frank Olaf Schreyer for very valuable discussions. Both authors thank the referee for the careful reading and many constructive suggestions, which helped to improve the presentation.

\textbf{Data availability statement} No new data were created or analysed in this study. Data sharing is not applicable
to this article.

\textbf{\large{Declarations}}

\textbf{Conflict of interest} On behalf of all authors, the corresponding author states that there is no Conflict of
interest.

\end{document}